\newtheorem{theorem}{Theorem}
\newtheorem{definition}[theorem]{Definition}
\newtheorem{lemma}[theorem]{Lemma}
\newtheorem{proposition}[theorem]{Proposition}
\newtheorem{remark}[theorem]{Remark}
\newenvironment{proof}[1][Proof]{\noindent\textbf{#1.} }{\ \rule{0.5em}{0.5em}}
\begin{document}

\title{\textbf{Relaxed Optimal Control Problem for a Finite Horizon G-SDE with Delay and Its Application in Economics}}
\author{\textbf{Omar Kebiri$^{1,2}$ and Nabil Elgroud$^{3,1}$}\\ 
$^{1}$Institute of Mathematics, Brandenburgische Technische Universit\"at\\ Cottbus-Senftenberg, Cottbus, Germany. \\
$^{2}$Institute of Mathematics, The free University of Berlin, Berlin, Germany.\\
$^{3}$Lab. of Probability and Statistics (LaPS), Department of \\Mathematics, Badji Mokhtar University, Annaba, Algeria.}
\date{}
\maketitle
\begin{abstract}

This paper investigates the existence of a G-relaxed optimal control of a controlled stochastic differential delay equation driven by G-Brownian motion (G-SDDE in short). First, we show that optimal control of G-SDDE exists for the finite horizon case. We present as an application of our result an economic model, which is represented by a G-SDDE, where we studied the optimization of this model. We connected the corresponding Hamilton Jacobi Bellman equation of our controlled system to a decoupled G-forward backward stochastic differential delay equation (G-FBSDDE in short). Finally, we simulate this G-FBSDDE to get the optimal strategy and cost. 

%%%%%%%%%% Enter suitable key words and phrases %%% examples:
\textbf{Key words:} \textsf{stochastic differential
 delay equation, $G$-Brownian motion, $G$-relaxed optimal control, economic model, forward backward stochastic differential delay equations.}\\
\textbf{MSC2020:} 93E20, 60H07, 60H10, 60H30.
\end{abstract} %%%%%%%%%%%%%
 
%%% These are examples only. Pls. use MSC 2020 for suitable topic numbers %%%%%%

%%%%%%%% begin papers' body %%%%%%%%%%%%%%%%%%%%%%%%%%%%%

%%%%%%%%%%%% Section 1 %%%%%%%%%%%%%%%%%%%%%%%%%%
\section{Introduction} \label{sec:1}

The classical stochastic optimal control problem for delayed systems has
received a lot of attention. Systems where the dynamics are
influenced by the current value of the state,  as well as the past values, are called stochastic differential delay equations (SDDEs).
This type of model is useful in situations where there is some memory in the
dynamics, such as economics and finance, as well as the population growth
models in biology (see \cite{Ivanov2008}, \cite{mansour2022}, \cite{stoica2004}).
Elsanosi et al \cite{Elsanosi2000} studied the stochastic maximum principle with delay and proved a verification theorem of a variational inequality. In \cite{Ivanov2008}, Ivanov studied the optimal control of stochastic
differential delay equations and gave an application to a stochastic model
in economics. Menoukeu-Pamen \cite{Menoukeu2015} obtained a necessary and sufficient
condition of optimality of stochastic maximum principle for delayed
stochastic differential games for a general non-Markovian stochastic
control problem under model uncertainty and delay. In \cite{oksendal2011}
the authors studied sufficient and necessary stochastic maximum principles of
time-delayed stochastic differential equations with jumps. Agram et al \cite{Agram2014} derived stochastic maximum principles for optimal control under
partial information in the infinite horizon for a system governed by
forward-backward stochastic differential equations (FBSDEs in short) with
delay. The existence of optimal control of non-linear
multiple-delay systems having an implicit derivative with quadratic
performance criteria by suitably adopting some of the techniques treated by 
\cite{balachandran1989}. Rosenblueth \cite{rosenblueth1992} proposed two different
relaxation procedures for optimal control problems involving transformations
of the state, and the control functions show that the resulting relaxed
problem has a solution, for which the existence of a minimizer is
assured. The optimal control of systems is described by delay differential
equations with a quadratic cost functional (see $e.g.,$ \cite{patel1982}).
Motivated by a concept of uncertainty which appears in many areas of sciences
contains inaccurate parameters, in general, the liquidity in the markets, risks
resulting from dark fluctuations, and their impact on the movement of the asset
prices and financial crises. In particular, the optimal portfolio choice
problems where the risk premium processes and the volatility are unknown.

Peng \cite{peng2007,peng2010}involved the sublinear
or G-expectation space with a process called G-Brownian motion, also
constructed It\^{o}'s stochastic calculus with respect to the G-Brownian
motion, and Gao \cite{gao2009} proved the existence and uniqueness of the solution of stochastic
differential equations driven by G-Brownian motion (G-SDEs). Furthermore, Fei et al \cite{Fei2019} studied the existence and stability of solutions
to highly nonlinear stochastic differential delay equations driven by G-Brownian motion (G-\text{SDDEs}). Some properties of
numerical solutions for semilinear stochastic delay differential equations
driven by G-Brownian motion have been studied by \cite{Yua2021}.

Xu in \cite{xu2010} studied the existence and uniqueness  theorem of backward
stochastic differential equations (BSDEs) under super linear expectation to provide probabilistic interpretation for the viscosity
solution of a class of Hamilton-Jacobi-Bellman (HJB in short) equations, he also shows that
BSDEs under super linear expectation could characterize a class of
stochastic control problems. The authors in  \cite{Hartmann2013} used the HJB equations that are recognized as the dynamic
programming equations of the optimal control problems.

Recently, the formulation of optimal control\ problem in a G-Brownian
motion was studied by \cite{biagini2018,hu2018,hu2014,Yin2022,Ren2017,Ren2018}. Moreover in \cite{redjil2018} they also study the
problem of the existence of optimal relaxed control for
stochastic differential equations driven by a G-Brownian motion. Moreover in \cite{Elgroud2022} the authors  studied the existence of relaxed optimal control for G%
-neutral stochastic functional differential equations with simulations results based on \cite{yang2016}.

In this paper, we consider an optimal control of systems governed by a
stochastic differential delay equation driven by a G-Brownian motion (G-SDDE)
\begin{eqnarray}
\left\{ 
\begin{aligned}
dX\left( t\right) &=b\left( t,X\left( t\right) ,X\left( t-\tau \right),u\left( t\right) ,u\left( t-\tau \right) \right) dt  \\ 
&+\gamma \left( t,X\left( t\right) ,X\left( t-\tau
\right) ,u\left( t\right) ,u\left( t-\tau \right) \right) d\left\langle B\right\rangle _{t} \\ 
&+\sigma \left( t,X\left( t\right) ,X\left( t-\tau
\right) \right) dB_{t},\quad t\in \left[ 0,T\right], \\ 
X\left( t\right) &=\eta(t),\quad t\in \left[ -\tau ,0\right] , \tau >0,
\end{aligned}
\right.\label{1.1}
\end{eqnarray}
 where $\eta$ $=\left\{ \eta \left( \theta \right) \right\} _{-\tau
\leq \theta \leq 0}$ $\in C\left( \left[ -\tau ,0\right] ;\mathbb{R}
^{n}\right)$, $0<\tau< T$, and $(B_{t})_{t\geq 0},$ is a $m$-dimensional 
G-Brownian motion defined on a space of sublinear expectation $\left(
\Omega ,\mathcal{H},\widehat{E}\right)$, with a universal filtration $
\mathbb{F}^{\mathcal{P}}=\left\{ \widehat{\mathcal{F}}_{t}^{\mathcal{P}
}\right\} _{t\geq 0}$, and using the space of probability measures $\mathcal{P}\left( \mathbb{A}\right) $
on $\mathcal{B}(\mathbb{A})$, where $\mathcal{B}(\mathbb{A})$ is the $\sigma $
-algebra of Borel subsets of the set $\mathbb{A}$ of values taken by the strict controls. $u\in \mathcal{U}\left( \left[ 0,T\right] \right)$ is called a strict control, with value in the action space $\mathbb{A}$, where we denote $\mathcal{U=U}\left( \left[ 0,T\right] \right)$ is a set of strict controls, and $\mathbb{A}$ is a compact polish subspace of $\mathbb{R}^{n}$.
$\left( \left\langle B\right\rangle _{t}\right) _{t\geq
0}$ is a quadratic variation process of G-Brownian motion.

Within the G-Brownian motion framework studied in \cite{peng2007,peng2010}, the controlled system $\left( \ref{1.1}\right) 
$ that minimizes the cost functional with the finite horizon $\left(
T<\infty \right) $ in which the coefficient of the initial cost depends on not
only on the current value of the state but also on past value, which is given by
\begin{eqnarray}
\widehat{E}\left[ \int_{0}^{T}\mathcal{L}(t,X\left( t\right) ,X\left(
t-\tau \right) ,u\left( t\right) ,u\left( t-\tau \right) )\,dt+\Psi (X\left(T\right) )\right]   \nonumber\\
=\underset{\mathbb{P\in}\mathcal{P}}{\sup}E^{\mathbb{P}}\left[\int_{0}^{T}\mathcal{L}(t,X\left(t\right) ,X\left(t-\tau\right),u\left(t\right),u\left(t-\tau\right))\,dt+\Psi (X\left(T\right))\right].            \label{1.2}
\end{eqnarray}

Our results are based on the aggregation property and the tightness
arguments of the distribution of the control problem.

This paper is organized as follows: In section \thinspace \ref{sec:2}, we
recall some notions and preliminaries, and we formulate our problem. In section \thinspace \ref{sec:3}, first, we introduce the
space of relaxed control and the problem of G-relaxed control of G-SDDE.
In section \thinspace \ref{sec:4}, we established the existence of a minimizer of the cost function
for the finite horizon. Then, we prove our main result, which is the existence of G-relaxed optimal control. In the last section, we provide a stochastic model in economics and its
optimization that have delay and randomness in the production cycle, In the last section, where the noise of the system is big, which prevents us to estimate the noise parameter. This will lead to a G-SDDE.
To minimize the
investment capital under the assumptions of labor, we obtain a probabilistic representation of the solution to the HJB equation, expressed in the form of a system of decoupled forward backward stochastic differential delay equations driven by G-Brownian motion. In the end, we present the simulation result of this G-FBSDDE.
%%%%%%%%%%%%%%% Section 2 %%%%%%%%%%%%%%%%%%%%%%%
 \section{Preliminaries and formulation of problem}\label{sec:2}

This section aims to give some basic concepts and results
of G-stochastic calculus. More details about these are included in \cite{denis2006,denis2011,soner2011m,soner2011,peng2007,peng2010}, and we introduce the formulation of the problem of optimal control for n-dimensional
stochastic differential delay equation is driven by G-Brownian motion (G-SDDE).

Let $\Omega :=\{\omega \in C(\left[ 0,T\right] ;\mathbb{R}^{n}):\omega
(0)=0\}$, be the space of real valued continuous functions on $\left[ 0,T%
\right] $ such that $\omega (0)=0,$\ equipped with the following distance
\begin{equation*}
d\left( w^{1},w^{2}\right) :=\sum_{N=1}^{\infty }2^{-N}\left( \left( 
\underset{0\leq t\leq N}{\max }\left\vert w_{t}^{1}-w_{t}^{2}\right\vert
\right) \wedge 1\right),
\end{equation*}
and $\Omega _{t}:=\{w_{.\wedge t}:w\in \Omega \}$, $B_{t}\left( w\right)
=w_{t},t\geq 0$ is the canonical process on $\Omega ,$ and let $\mathbb{F}:=(%
\mathcal{F}_{t}\,)_{\,t\geq 0}$ be the natural filtration generated by $%
(B_{t}\,)_{\,t\geq 0}$. In addition, for each $t\in \left[ 0,\infty \right), $we set 
\begin{eqnarray*}
\mathcal{F}_{t^{+}} &:&=\cap _{s>t}\mathcal{F}_{s}, \\
\mathbb{F^{+}} &:&=(\mathcal{F}_{t^{+}}\,)_{\,t\geq 0}, \\
\mathcal{F}_{t}^{\mathbb{P}} &:&=\mathcal{F}_{t^{+}}\vee \mathcal{N}^{
\mathbb{P}}(\mathcal{F}_{t^{+}}), \\
\widehat{\mathcal{F}}_{t}^{\mathbb{P}} &:&=\mathcal{F}_{t^{+}}\vee \mathcal{N}^{
\mathbb{P}}(\mathcal{F}_{\infty }),
\end{eqnarray*}
with $\mathcal{N}^{\mathbb{P}}(\mathcal{G})$ denotes a $\mathbb{P}$%
-negligible set on a $\sigma $-algebra $\mathcal{G}$ given by%
\begin{equation*}
\mathcal{N}^{\mathbb{P}}(\mathcal{G}):=\{D\subset \Omega :\exists \ 
\widetilde{D}\in \mathcal{G}\ \ \text{such\ that}\ D\subset \widetilde{D}\ 
\text{\ and}\ \mathbb{P}[\widetilde{D}]=0\}.
\end{equation*}
Consider the following space, for $t\in \left[ 0,T\right]$
\begin{eqnarray*}
Lip(\Omega _{t}) &:&=\left\{ \varphi (B_{t_{1}},...,B_{t_{n}}):\varphi \in
C_{b,Lip}(\mathbb{R}^{n})\text{ and }t_{1},t_{2},...,t_{n}\in \left[ 0,t
\right] \right\} , \\
Lip(\Omega ) &:&=\underset{n\in \mathbb{N}}{\cup }Lip(\Omega _{n}),
\end{eqnarray*}
with $C_{b,Lip}(\mathbb{R}^{n})$ denotes a space of bounded and Lipschitz on 
$\mathbb{R}^{n}.$ Let $T>0$ be a fixed time.
We have the following lemma
\begin{lemma}
For any $\mathbb{P}$ arbitrary probability measure on $\left( \Omega,\mathcal{F}_{\infty }\right).$ There is a $\mathbb{P}$-a.s. unique $\mathcal{
F}_{t}$-measurable random variable for each $\xi $ such that, for every $
\widehat{\mathcal{F}}_{t}^{\mathbb{P}}$-measurable random variable $\widehat{%
\xi }$, we have 
\begin{equation*}
\xi =\widehat{\xi },\quad\mathbb{P}-a.s.
\end{equation*}
\end{lemma}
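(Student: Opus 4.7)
The plan is to show that every $\widehat{\mathcal{F}}_t^{\mathbb{P}}$-measurable random variable $\widehat{\xi}$ agrees $\mathbb{P}$-almost surely with some $\mathcal{F}_t$-measurable (really $\mathcal{F}_{t^+}$-measurable, see below) representative $\xi$, the uniqueness being automatic: if $\xi_1,\xi_2$ are two such representatives then $\xi_1=\widehat\xi=\xi_2$ $\mathbb{P}$-a.s., hence $\xi_1=\xi_2$ $\mathbb{P}$-a.s.

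First, I would treat indicators. Define
\[
\mathcal{D}:=\{A'\,\triangle\,N : A'\in\mathcal{F}_{t^+},\ N\in\mathcal{N}^{\mathbb{P}}(\mathcal{F}_\infty)\}.
\]
A direct check shows $\mathcal{D}$ is a $\sigma$-algebra: it is closed under complements because $(A'\triangle N)^{c}=A'^{c}\triangle N$, and closed under countable unions because $\mathcal{N}^{\mathbb{P}}(\mathcal{F}_\infty)$ is closed under countable unions (being the $\mathbb{P}$-negligible sets on $\mathcal{F}_\infty$, which is a complete $\sigma$-ideal). Since $\mathcal{D}\supseteq\mathcal{F}_{t^+}\cup\mathcal{N}^{\mathbb{P}}(\mathcal{F}_\infty)$ and $\mathcal{D}\subseteq\widehat{\mathcal{F}}_t^{\mathbb{P}}$, one has $\mathcal{D}=\widehat{\mathcal{F}}_t^{\mathbb{P}}$. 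Therefore every $A\in\widehat{\mathcal{F}}_t^{\mathbb{P}}$ writes as $A=A'\triangle N$ for some $A'\in\mathcal{F}_{t^+}$ and $\mathbb{P}$-null $N$, which yields $\mathbf{1}_A=\mathbf{1}_{A'}$ $\mathbb{P}$-a.s.

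Next I would bootstrap from indicators to general random variables in the usual three stages: by linearity to simple functions, by monotone pointwise limits to nonnegative $\widehat{\mathcal{F}}_t^{\mathbb{P}}$-measurable random variables (using that the pointwise $\limsup$ of $\mathcal{F}_{t^+}$-measurable functions stays $\mathcal{F}_{t^+}$-measurable, so taking $\xi:=\limsup_n \xi_n$ of the representatives of an approximating sequence $\widehat\xi_n\uparrow\widehat\xi$ provides a single $\mathcal{F}_{t^+}$-measurable version equal to $\widehat\xi$ on the intersection of the almost-sure equality sets, which is still of full $\mathbb{P}$-measure), and finally to signed random variables through the $\xi=\xi^{+}-\xi^{-}$ decomposition.

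The delicate point, which I would flag as the main obstacle, is that the construction above naturally produces an $\mathcal{F}_{t^+}$-measurable representative, whereas the statement asks for an $\mathcal{F}_t$-measurable one. To close this gap one invokes the continuity of the canonical process $B$: the raw filtration $\mathbb{F}$ generated by a continuous process satisfies $\mathcal{F}_{t^+}=\mathcal{F}_t$ modulo $\mathbb{P}$-null sets for any Borel probability $\mathbb{P}$ on $(\Omega,\mathcal{F}_\infty)$, so an $\mathcal{F}_{t^+}$-measurable version can be further modified on a $\mathbb{P}$-null set to become $\mathcal{F}_t$-measurable. Otherwise, if the statement is to be read with $\mathcal{F}_{t^+}$ in place of $\mathcal{F}_t$, the monotone-class argument above is already complete.
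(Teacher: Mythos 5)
First, a point of comparison: the paper does not actually prove this lemma --- it is quoted (with the quantifiers somewhat scrambled) from Lemma 2.1 of \cite{soner2011m}, and the text simply refers the reader there. So your monotone-class argument is the only proof on the table, and its core is correct: the family $\mathcal{D}=\{A'\triangle N\}$ is indeed a $\sigma$-algebra sandwiched between $\mathcal{F}_{t^+}\cup\mathcal{N}^{\mathbb{P}}(\mathcal{F}_\infty)$ and $\widehat{\mathcal{F}}_t^{\mathbb{P}}$ (closure under countable unions uses that $\mathcal{N}^{\mathbb{P}}(\mathcal{F}_\infty)$ is hereditary and closed under countable unions, both of which hold by its definition in the paper), and the passage from indicators to simple functions to monotone limits to signed random variables is routine. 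Up to that point this is essentially the standard proof of the cited lemma.

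The genuine gap is your last paragraph. It is not true that $\mathcal{F}_{t^+}=\mathcal{F}_t$ modulo $\mathbb{P}$-null sets for an \emph{arbitrary} probability measure $\mathbb{P}$ on $(\Omega,\mathcal{F}_\infty)$; continuity of the canonical process does not give this, and no Blumenthal-type zero--one law is available without further structure on $\mathbb{P}$ (e.g.\ the strong Markov property). Concretely, take $t=0$ and let $\mathbb{P}$ be the law of the path $s\mapsto \epsilon s$, where $\epsilon=\pm1$ with probability $1/2$ each. Then $\mathcal{F}_0=\sigma(B_0)$ is $\mathbb{P}$-trivial, so every set in $\mathcal{F}_0\vee\mathcal{N}^{\mathbb{P}}(\mathcal{F}_\infty)$ has probability $0$ or $1$, while $\{\epsilon=1\}\in\mathcal{F}_{0^+}$ has probability $1/2$; hence the $\widehat{\mathcal{F}}_0^{\mathbb{P}}$-measurable random variable $\mathbf{1}_{\{\epsilon=1\}}$ admits no $\mathcal{F}_0$-measurable $\mathbb{P}$-version. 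So the lemma as printed, with $\mathcal{F}_t$ and $\mathbb{P}$ arbitrary, is actually false; the correct statement (and the one in the cited source) produces an $\mathcal{F}_{t^+}$-measurable version, which is exactly what your construction delivers. Your hedge in the final sentence is the right reading: the fix is to replace $\mathcal{F}_t$ by $\mathcal{F}_{t^+}$ in the statement, not to invoke right-continuity of the raw filtration under every $\mathbb{P}$.
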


For every $\widehat{\mathbb{F}}\mathbb{^{\mathbb{P}}}$-progressively
measurable process $\widehat{X}$, there is a unique $\mathbb{F}$-progressively measurable process $X$ such that
\begin{equation*}
X=\widehat{X},\quad dt\times\mathbb{P}-a.s.
\end{equation*}
Furthermore, if $\widehat{X}$ is $\mathbb{P}$-$a.s.$ continuous, then $X$ can
be chosen to $\mathbb{P}$-$a.s.$ continuous. 
To follow the details see (Lemma 2.1, \cite{soner2011m}).\\
The G-expectation $\widehat{E}$ $:\mathcal{H}:=Lip(\Omega
_{T})\longrightarrow \mathbb{R}$, constructed by \cite{peng2007}, is a
consistent sublinear expectation on the lattice $\mathcal{H}$ of real
functions, it satisfies:
\begin{enumerate}
\item Sub-additivity: $\widehat{E}[X+Y]\leq \widehat{E}[X]+\widehat{E}[Y],$\quad
for all $X,Y\in \mathcal{H},$
\item Monotonicity: $X\geq Y\Rightarrow \widehat{E}[X]\geq \widehat{E}[Y],$\quad
for all $X,Y\in \mathcal{H},$
\item Constant preserving: $\widehat{E}[c]=c,$\quad for all $c\in \mathbb{R},$
\item Positive homogeneity: $\widehat{E}[\lambda X]=\lambda \widehat{E}
[X],$\quad for all $\lambda \geq 0,$ $X\in \mathcal{H}.$
\end{enumerate}

If $1$ and $2$ are only satisfied, the triple $\left( \Omega, \mathcal{H},\widehat{E}\right) $ is said to be sub-linear expectation space, 
$\ \widehat{E}\left[ .\right] $ is also referred to as a nonlinear
expectation and the triple $\left( \Omega ,\mathcal{H},\widehat{E}\right)$
is called a nonlinear expectation space.
We suppose that, if $Y=(Y_{1},...,Y_{n}), Y_{i}$ $\in \mathcal{H}$, then $\varphi (Y_{1},...,Y_{n})\in \mathcal{H}$ for all $\varphi $ $\in C_{b,Lip}(
\mathbb{R}^{n}).$
\begin{definition}
Under $\widehat{E}$ if for any $\varphi \in C_{b,Lip}(\mathbb{R}^{n+m}),$ the
random vector $Y=(Y_{1},...,Y_{n})$ is said to be independent from another
random vector $X=(X_{1},...,X_{m}),$
\begin{equation*}
\widehat{E}\left[ \varphi \left( X,Y\right) \right] =\widehat{E}\left[ 
\widehat{E}\left[ \varphi \left( x,Y\right) \right] _{x=X}\right].
\end{equation*}
\end{definition}
\begin{definition}
Under the G-expectation $\widehat{E}[\cdot ],$ an $n$-dimensional random
vector $X$ on $\left( \Omega ,\mathcal{H},\widehat{E}\right) $ is said to be 
G-normally distributed for any $\varphi $ $\in C_{b,Lip}(\mathbb{R}^{n})$, if the function $u$ defined by%
\begin{equation*}
u(t,x):=\widehat{E}[\varphi (x+\sqrt{t}X)],\quad\left( t,x\right) \in \left[
0,T\right] \times 
\mathbb{R}^{n},
\end{equation*}
is the unique viscosity solution of the parabolic equation%
\begin{equation*}
\left\{ 
\begin{array}{l}
\frac{\partial u}{\partial t}-G(D^{2}u)=0,\quad (t,x)\in \left[ 0,T\right]\times \mathbb{R}^{n}, \\
u(0,x)=\varphi (x),
\end{array}
\right.
\end{equation*}
with $D^{2}u=(\partial _{x_{i}x_{j}}^{2}u)_{1\leq i,j\leq n}$ denotes the Hessian matrix of $u$ and let define the nonlinear operator G by 
\begin{equation*}
G(A):=\frac{1}{2}\sup_{\gamma \in \Gamma }\left\{ tr(\gamma \gamma ^{\ast
}A)\right\} ,\quad \gamma \in 
\mathbb{R}
^{n\times n},
\end{equation*}
where $\ A$ denotes a $n\times n$ symmetric matrix and $\Gamma $ is a
given non-empty, bounded, and closed subset of $%
\mathbb{R}
^{n\times n}$. The transpose of the vector $v$ is denoted by $v^{\ast }$. $\mathcal{N}(0,$ $\Sigma )$ denotes the G-normal distribution, where $%
\Sigma :=\{\gamma \gamma ^{\ast },\,\,\gamma \in \Gamma \}$.
\end{definition}
\begin{definition}[\textbf{G-Brownian Motion}]
The canonical process $\left( B_{t}\right) _{t\geq 0}$ on \\
$\left( \Omega ,%
\text{ }\mathcal{H},\text{{}}\widehat{E}\right) $ is called a G-Brownian
motion if the following properties hold:
\begin{itemize}
\item $B_{0}=0.$
\item \textit{For each} $t,s\geq 0$ \textit{the increment} $B_{t+s}-B_{t}$ \
is $\mathcal{N}(0,$ $s\Sigma )$-distributed$.$
\item $B_{t_{1}},B_{t_{2}},...,B_{t_{n}}$ is \textit{independent of} $B_{t},$
\textit{for\ }$n\geq 1$ \textit{and} $t_{1},t_{2},...,t_{n}\in \left[ 0,t\right].$
\end{itemize}
\end{definition}
For $p\geq 1,$ we use $L_{G}^{p}(\Omega _{T})$ to signify the completion of $%
Lip(\Omega _{T})$ under the natural norm%
\begin{equation*}
\Vert X\Vert _{L_{G}^{p}(\Omega _{T})}^{p}:=\widehat{E}[|X|^{p}],
\end{equation*}
and define $M_{G}^{0,p}(0,T)$ as the space of $\mathbb{F}$-progressively
measurable, $\mathbb{R}^{n}$-valued simple processes of the form%
\begin{equation*}
\eta (t)=\eta (t,w)=\sum_{i=0}^{n-1}\xi _{t_{i}}\left( w\right) \mathbb{I}%
_{[t_{i},t_{i+1})}(t),
\end{equation*}
with $\left\{ t_{0},\cdots ,t_{n}\right\} $ denotes a subdivision of $\left[0,T\right]$. The space $M_{G}^{p}(0,T)$ is called the closure of $M_{G}^{0,p}(0,T)$
with respect to the norm%
\begin{equation*}
\Vert \eta \Vert _{M_{G}^{p}(0,T)}^{p}:=\int_{0}^{T}\widehat{E}[
\underset{0\leq t\leq T}{\sup }|\eta (t)|^{p}ds].
\end{equation*}

Note that $M_{G}^{q}(0,T)\subset M_{G}^{p}(0,T)$ if $1\leq p<q$. For each $%
t\geq 0$, let $L^{0}(\Omega _{t})$ be the set of $F_{t}$-measurable
functions. We set%
\begin{equation*}
Lip(\Omega _{t}):=Lip(\Omega )\cap L^{0}(\Omega _{t}),\quad
L_{G}^{p}(\Omega_{t}):=L_{G}^{p}(\Omega )\cap L^{0}(\Omega _{t}).
\end{equation*}

For each $\eta \in M_{G}^{0,2}(0,T)$, the related It\^{o} integral of $%
\left( B_{t}\right) _{t\geq 0}$ is defined by%
\begin{equation*}
I(\eta )=\int_{0}^{T}\eta \left( s\right) d_{G}B_{s}:=\sum_{j=0}^{N-1}\eta
_{j}(B_{t_{j+1}}-B_{t_{j}}),
\end{equation*}

where $I:\,M_{G}^{0,2}(0,T)\rightarrow L_{G}^{2}(\Omega _{T})$ is the
mapping continuously extended to $M_{G}^{2}(0,T).$ The quadratic variation
process $\langle B\rangle _{t}^{G}$ of $\left( B_{t}\right) _{t\geq 0}$ is
not always a deterministic process, and can be formulated in $%
L_{G}^{2}(\Omega _{T})$ by the continuous $n\times n$%
-symmetric-matrix-valued process given by

\begin{equation}
\langle B\rangle _{t}^{G}:=B_{t}\otimes B_{t}-2\int_{0}^{t}B_{s}\otimes
d_{G}B_{s},  \label{2.1}
\end{equation}

where a diagonal is constituted of non-decreasing processes. Here, for $a,b\in $
$
\mathbb{R}
^{n}$, the $n\times n-$symmetric matrix $a\otimes b$ is defined by $a\otimes
bx$ $=(a\cdot x)b$ for $x\in 
\mathbb{R}
^{n}$, where \textquotedblright $\cdot $\textquotedblright\ represents the
scalar product in $\mathbb{R}^{n}$.

Let the mapping $\mathcal{J}_{0,T}\left( \eta \right)
:M_{G}^{0,1}(0,T)\mapsto \mathbb{L}_{G}^{1}(\Omega _{T})$ for each $\eta \in
M_{G}^{0,1}(0,T)$, \ given by
\begin{equation*}
\mathcal{J}_{0,T}\left( \eta \right) =\int_{0}^{T}\eta \left( t\right)
d\langle B\rangle _{t}^{G}:=\sum_{j=0}^{N-1}\xi _{j}(\langle B\rangle
_{t_{j+1}}^{G}-\langle B\rangle _{t_{j}}^{G}).
\end{equation*}

Then $\mathcal{J}_{0,T}\left( \eta \right) $ can be extended continuously to 
$\mathcal{J}_{0,T}\left( \eta \right) :M_{G}^{1}(0,T)\rightarrow \mathbb{L}%
_{G}^{1}(\Omega _{T})$ for each $\eta \in M_{G}^{0,1}(0,T),$%
\begin{equation*}
\mathcal{J}_{0,T}\left( \eta \right) :=\int_{0}^{T}\eta \left( t\right)
d\langle B\rangle _{t}^{G}.
\end{equation*}

We have the following properties (formulated for the case d = 1, for
simplicity).

\begin{lemma}
(\cite{peng2010}) For each $p\geq 1$, and $\eta \in M_{G}^{2}(0,T)$ we have 
\begin{eqnarray*}
\widehat{E}\left[ \left( \int_{0}^{T}\eta \left( t\right)d_{G}B_{t}\right) ^{2}\right] &=&
\widehat{E}\left[ \int_{0}^{T}\left(\eta \left(t\right)\right)^{2}d\langle B\rangle _{t}^{G}\right]\\
&\leq& \overline{\sigma }^{2}\widehat{E}\left[ \int_{0}^{T}\left(\eta \left(t\right)\right)^{2}dt
\right], \quad \left( \text{isometry}
\right),
\end{eqnarray*}
and, for each $\eta \in M_{G}^{p}(0,T)$, we have
\begin{eqnarray*}
\widehat{E}\left[ \int_{0}^{T}\left\vert \eta \left(t\right)\right\vert ^{p}dt\right]
&\leq& \int_{0}^{T}\widehat{E}\left[ \left\vert \eta \left(t\right)\right\vert ^{p}
\right] dt.
\end{eqnarray*}
\end{lemma}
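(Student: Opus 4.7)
The plan is to prove the three claims in the natural order: Itô isometry, its upper bound by $\overline{\sigma}^{2}\,dt$, and then the Fubini-type inequality, in each case first establishing the statement for simple processes in $M_{G}^{0,2}(0,T)$ (respectively $M_{G}^{0,p}(0,T)$) and then passing to the full space by the density definition and continuity of $\widehat{E}$.

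First I would take $\eta \in M_{G}^{0,2}(0,T)$ of the elementary form $\eta(t)=\sum_{j=0}^{N-1}\xi_{t_{j}} I_{[t_{j},t_{j+1})}(t)$, so that $\int_{0}^{T}\eta\,d_{G}B_{s}=\sum_{j}\xi_{t_{j}}(B_{t_{j+1}}-B_{t_{j}})$. Expanding the square, I would split the sum into diagonal terms $\xi_{t_{j}}^{2}(B_{t_{j+1}}-B_{t_{j}})^{2}$ and off-diagonal cross terms. Using the independence of increments of $G$-Brownian motion stated in the previous section and the fact that $\widehat{E}[B_{t+s}-B_{t}]=\widehat{E}[-(B_{t+s}-B_{t})]=0$, the cross terms vanish under $\widehat{E}$, and the diagonal terms reduce by definition $(\ref{2.1})$ of the quadratic variation to $\widehat{E}[\sum_{j}\xi_{t_{j}}^{2}(\langle B\rangle_{t_{j+1}}^{G}-\langle B\rangle_{t_{j}}^{G})]=\widehat{E}[\int_{0}^{T}\eta(t)^{2}\,d\langle B\rangle_{t}^{G}]$. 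This gives the isometry on simple processes, and by continuous extension of $I(\cdot)$ to $M_{G}^{2}(0,T)$ and of $\mathcal{J}_{0,T}(\cdot)$ to $M_{G}^{1}(0,T)$ (noting $\eta^{2}\in M_{G}^{1}(0,T)$ when $\eta\in M_{G}^{2}(0,T)$), together with the continuity of $\widehat{E}$ in the $L_{G}^{p}$-norm, the identity carries over to $M_{G}^{2}(0,T)$.

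For the upper bound, the key input is that the quadratic variation satisfies $\langle B\rangle_{t+s}^{G}-\langle B\rangle_{t}^{G}\leq \overline{\sigma}^{2}s$ in the sense of $\widehat{E}$, where $\overline{\sigma}^{2}=\widehat{E}[B_{1}^{2}]=\sup_{\gamma\in\Gamma}\mathrm{tr}(\gamma\gamma^{\ast})$ (this is the standard bound from Peng's framework, which I would cite from \cite{peng2010}). Thus on a simple process, $\widehat{E}[\sum_{j}\xi_{t_{j}}^{2}(\langle B\rangle_{t_{j+1}}^{G}-\langle B\rangle_{t_{j}}^{G})]\leq \overline{\sigma}^{2}\widehat{E}[\sum_{j}\xi_{t_{j}}^{2}(t_{j+1}-t_{j})]=\overline{\sigma}^{2}\widehat{E}[\int_{0}^{T}\eta(t)^{2}dt]$ by monotonicity, and I pass to $M_{G}^{2}(0,T)$ by density as above.

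For the second inequality, on a simple process $\eta(t)=\sum_{j}\xi_{t_{j}}I_{[t_{j},t_{j+1})}(t)$ with $\xi_{t_{j}}\in L_{G}^{p}$, the deterministic integral reduces to $\int_{0}^{T}|\eta(t)|^{p}dt=\sum_{j}|\xi_{t_{j}}|^{p}(t_{j+1}-t_{j})$, so by sub-additivity and positive homogeneity of $\widehat{E}$ we get $\widehat{E}[\int_{0}^{T}|\eta(t)|^{p}dt]\leq \sum_{j}\widehat{E}[|\xi_{t_{j}}|^{p}](t_{j+1}-t_{j})=\int_{0}^{T}\widehat{E}[|\eta(t)|^{p}]dt$. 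Then I pass to $\eta\in M_{G}^{p}(0,T)$ by approximating in the $M_{G}^{p}$-norm and using the continuity of both sides in that norm (on the right, one notes that $t\mapsto \widehat{E}[|\eta(t)|^{p}]$ is $L^{1}(0,T)$-continuous in the approximation thanks to sub-additivity applied to $||\eta_{n}|^{p}-|\eta|^{p}|$ together with the $c_{p}$-inequality). The one step where care is required, and the only real obstacle, is the quadratic variation bound $\langle B\rangle^{G}\leq \overline{\sigma}^{2}t$ in the sub-linear sense; once this standard fact from the $G$-expectation literature is invoked, the remainder of the argument is routine density and sub-additivity.
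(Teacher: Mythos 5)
The paper does not prove this lemma at all: it is quoted verbatim from Peng's monograph \cite{peng2010}, so there is no in-paper argument to compare against, and your reconstruction is essentially the standard proof from that source. Your overall architecture (establish each claim on simple processes in $M_{G}^{0,2}$ resp.\ $M_{G}^{0,p}$, then extend by density and continuity of $\widehat{E}$) is the right one, and the second and third claims are handled correctly: the bound $\langle B\rangle_{t}^{G}-\langle B\rangle_{s}^{G}\leq \overline{\sigma}^{2}(t-s)$ q.s.\ is exactly the fact the paper itself records before Lemma \ref{g-bdg2}, and the Fubini-type inequality genuinely needs only sub-additivity and positive homogeneity on simple processes, as you say.

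The one step you should tighten is the cancellation of the cross terms (and, likewise, the replacement of the diagonal terms $\xi_{t_{j}}^{2}(B_{t_{j+1}}-B_{t_{j}})^{2}$ by $\xi_{t_{j}}^{2}(\langle B\rangle_{t_{j+1}}^{G}-\langle B\rangle_{t_{j}}^{G})$, since by \eqref{2.1} these differ by a stochastic integral term $2\xi_{t_{j}}^{2}\int_{t_{j}}^{t_{j+1}}(B_{s}-B_{t_{j}})\,dB_{s}$ that must also be argued away). Under a sublinear expectation, ``each cross term has $\widehat{E}$-expectation zero'' does not by itself imply that they drop out of $\widehat{E}$ of the sum: sub-additivity only yields $\widehat{E}[X+Y]\leq\widehat{E}[X]+\widehat{E}[Y]$. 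What is needed is the additivity property that $\widehat{E}[X+Y]=\widehat{E}[X]+\widehat{E}[Y]$ whenever $\widehat{E}[Y]=-\widehat{E}[-Y]$ (no mean uncertainty), applied conditionally through the independence of increments: for $i<j$ one conditions on $\mathcal{F}_{t_{j}}$ and uses $\widehat{E}[\lambda\,\Delta B_{j}]=\lambda^{+}\widehat{E}[\Delta B_{j}]+\lambda^{-}\widehat{E}[-\Delta B_{j}]=0$. You state the symmetry $\widehat{E}[\Delta B]=\widehat{E}[-\Delta B]=0$, which shows you have the right ingredient in hand, but the proof should invoke this additivity lemma explicitly rather than let ``the cross terms vanish'' stand on its own; without it the equality in the isometry would only be an inequality. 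With that lemma cited, the argument is complete.
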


\begin{proposition}
(\cite{denis2006}) Assume $d=1$. There exists a weakly compact family of
probability measures $\mathcal{P}$ on $(\Omega ,$ $\mathcal{B}(\Omega )),$ 
such that%
\begin{equation*}
\widehat{E}[\xi ]=\sup_{\mathbb{P}\in \mathcal{P}}E^{\mathbb{P}}[\xi ],\quad \text{
for each }\xi \in {\mathbb{L}}_{G}^{1}(\Omega ).
\end{equation*}
Then $c(.)$ is the associated regular Choquet capacity related to $\mathbb{P}
$ defined by
\begin{equation*}
c(C):=\sup_{\mathbb{P}\in \mathcal{P}}\mathbb{P}(C),\quad C\in \mathcal{B}%
(\Omega ).
\end{equation*}
\end{proposition}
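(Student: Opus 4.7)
The plan is to construct $\mathcal{P}$ explicitly as the set of laws on $\Omega$ induced by controlled It\^o processes whose volatility takes values in the square root of the uncertainty set $\Sigma$, and then verify the representation first on the lattice $\mathrm{Lip}(\Omega_T)$ and extend by density to $\mathbb{L}_G^1(\Omega)$. Concretely, let $(W_t)_{t\geq 0}$ be a standard Brownian motion on some auxiliary probability space, and denote by $\Theta$ the set of progressively measurable processes $\theta$ with $\theta_t\theta_t^{\ast}\in\Sigma$ for every $t$. For each $\theta\in\Theta$ set $X^{\theta}_t:=\int_0^t \theta_s\, dW_s$ and let $\mathbb{P}^{\theta}$ denote the law of $X^{\theta}$ on $(\Omega,\mathcal{B}(\Omega))$. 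Take $\mathcal{P}_0:=\{\mathbb{P}^{\theta}:\theta\in\Theta\}$ and define $\mathcal{P}$ to be its closure under the weak topology on probability measures.

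First I would identify $\widehat{E}$ with $\sup_{\mathbb{P}\in\mathcal{P}_0}E^{\mathbb{P}}$ on $\mathrm{Lip}(\Omega_T)$. For a cylinder random variable $\varphi(B_{t_1},\dots,B_{t_n})$ with $\varphi\in C_{b,\mathrm{Lip}}$, Peng's definition computes $\widehat{E}[\varphi(B_{t_1},\dots,B_{t_n})]$ by iterating the viscosity solution of the parabolic PDE $\partial_t u-G(D^2 u)=0$. On the controlled side, the Hamilton--Jacobi--Bellman representation of the same fully nonlinear PDE gives $u(t,x)=\sup_{\theta\in\Theta}E[\varphi(x+X^{\theta}_t)]$, because the generator $G(A)=\tfrac12\sup_{\gamma\in\Gamma}\mathrm{tr}(\gamma\gamma^{\ast}A)$ is exactly the Bellman operator of optimizing the volatility over $\Gamma$. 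Iterating over the time points $t_1<\dots<t_n$ and using the independence/stationarity of the G-increments recalled in the G-Brownian motion definition matches the two expressions.

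Second I would secure weak compactness of $\mathcal{P}$. Since $\Gamma$ is bounded, there is $M<\infty$ such that $\mathrm{tr}(\gamma\gamma^{\ast})\leq M$ for every $\gamma\in\Gamma$, so the Burkholder--Davis--Gundy inequality applied to $X^{\theta}$ yields a bound
\begin{equation*}
E\bigl[|X^{\theta}_t-X^{\theta}_s|^{4}\bigr]\leq C_M\,(t-s)^{2}
\end{equation*}
uniform in $\theta\in\Theta$. Kolmogorov's tightness criterion then shows $\mathcal{P}_0$ is tight on $C([0,T];\mathbb{R}^n)$, hence relatively compact by Prokhorov, and $\mathcal{P}$ is therefore weakly compact. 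The supremum representation survives the closure operation because every cylinder function $\varphi(B_{t_1},\dots,B_{t_n})$ is bounded and continuous on $\Omega$, so $\mathbb{P}\mapsto E^{\mathbb{P}}[\varphi(B_{t_1},\dots,B_{t_n})]$ is weakly continuous.

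Third I would extend from $\mathrm{Lip}(\Omega_T)$ to $\mathbb{L}_G^{1}(\Omega)$ by a density argument: for $\xi\in\mathbb{L}_G^{1}(\Omega)$ pick $\xi_n\in\mathrm{Lip}(\Omega)$ with $\widehat{E}[|\xi_n-\xi|]\to 0$; since $\mathcal{P}_0\subset\mathcal{P}$, the upper expectation $\rho(\cdot):=\sup_{\mathbb{P}\in\mathcal{P}}E^{\mathbb{P}}[\cdot]$ satisfies $\rho(|\xi_n-\xi|)\leq \widehat{E}[|\xi_n-\xi|]\to 0$ on cylinder approximants and hence on their $L_G^1$-limit, yielding the equality $\widehat{E}[\xi]=\rho(\xi)$. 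Finally, $c(C):=\sup_{\mathbb{P}\in\mathcal{P}}\mathbb{P}(C)$ is a Choquet capacity: monotonicity is obvious, continuity along increasing sequences of open sets follows from $\mathbf{1}_{O_n}\uparrow \mathbf{1}_{\cup O_n}$ and the representation above, and outer regularity on Borel sets is a standard consequence of the weak compactness of $\mathcal{P}$. The main obstacle is the first step, the identification of Peng's PDE-defined G-expectation with the stochastic-control representation on cylinder functionals, since it requires the nonlinear Feynman--Kac formula for the fully nonlinear G-heat equation; the remaining steps are standard tightness, density, and capacity arguments.
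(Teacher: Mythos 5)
The paper does not actually prove this proposition — it is quoted as a known result from Denis, Hu and Peng (\cite{denis2006}) with no internal argument — so there is no in-paper proof to compare against. Your outline reproduces the standard argument of that reference (laws of $\int_0^{\cdot}\theta_s\,dW_s$ with $\theta\theta^{\ast}\in\Sigma$, identification with $\widehat{E}$ on cylinder functions via the stochastic-control representation of the G-heat equation, tightness from the uniform fourth-moment bound plus Prokhorov, and a density extension to ${\mathbb{L}}_G^1(\Omega)$), and its main steps are sound, with the one genuinely delicate point — the nonlinear Feynman--Kac identification on $\mathrm{Lip}(\Omega_T)$ — correctly flagged as the crux.
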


\begin{definition}
\textit{If} $c(C)=0$\textit{ or equivalently if }$\mathbb{P}
(C)=0$ \textit{for all }$\mathbb{P}\in \mathcal{P}.$ \textit{A set }$C\in
B(\Omega )$ is polar if a property holds outside a polar set.
It is called quasi surely ($q.s.$).
\end{definition}

Let us define $\mathcal{N}_{\mathcal{P}}$ the $\mathcal{P}$-polar sets%
\begin{equation*}
\mathcal{N}_{\mathcal{P}}:=\bigcap_{\mathbb{P}\in \mathcal{P}}\mathcal{N}^{%
\mathbb{P}}(\mathcal{F}_{\infty }).
\end{equation*}

For the possibly mutually singular probability measures $\mathbb{P}$, $\mathbb{P
}\in \mathcal{P}$ in \cite{soner2011}, we must utilise the following
universal filtration $\mathbb{F}^{\mathcal{P}}$ 
\begin{eqnarray*}
\mathbb{F}^{\mathcal{P}} &:&=\{\widehat{\mathcal{F}}_{t}^{\mathcal{P}%
}\}_{t\geq 0},\\
\quad \widehat{\mathcal{F}}_{t}^{\mathcal{P}} &:&=\bigcap_{\mathbb{P}\in 
\mathcal{P}}(\mathcal{F}_{t}^{\mathbb{P}}\vee \mathcal{N}_{\mathcal{P}
}),\quad \text{for } t\geq 0.
\end{eqnarray*}

The dual expression of the G-expectation gives the following aggregation property.

\begin{lemma}
\label{aggreg}Let $\eta $ $\in 
M_{G}^{2}(0,T)$. Then, $\eta $ is It\^{o}-integrable for every $%
\mathbb{P}\in \mathcal{P}$. Moreover, for every $t\in \lbrack 0,T]$,%
\begin{equation*}
\int_{0}^{t}\eta(s)d_{G}B_{s}=\int_{0}^{t}\eta (s)dB_{s},\mathbb{P-}a.s.,\quad \text{for every }\mathbb{P}\in \mathcal{P}.
\end{equation*}
\end{lemma}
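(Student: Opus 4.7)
The plan is to reduce the claim to simple processes via an approximation argument, exploiting the fact that on the class $M_{G}^{0,2}(0,T)$ both integrals are manifestly the same Riemann sum, and then pass to the limit in two complementary topologies: the $M_G^2$-norm (for the G-integral) and the classical $L^2(\mathbb{P}\otimes dt)$-norm (for the $\mathbb{P}$-Itô integral). The bridge between them is the duality formula $\widehat{E}[\cdot]=\sup_{\mathbb{P}\in\mathcal{P}}E^{\mathbb{P}}[\cdot]$, which furnishes the domination $E^{\mathbb{P}}[X]\le\widehat{E}[X]$ for every $\mathbb{P}\in\mathcal{P}$ and every $X\in L_G^1$.

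Concretely, fix $\eta\in M_G^2(0,T)$ and choose a sequence $\eta^n\in M_G^{0,2}(0,T)$ with $\|\eta^n-\eta\|_{M_G^2}\to 0$. The domination above yields
\begin{equation*}
\int_0^T E^{\mathbb{P}}\bigl[|\eta^n(s)-\eta(s)|^2\bigr]\,ds\le \int_0^T \widehat{E}\bigl[|\eta^n(s)-\eta(s)|^2\bigr]\,ds\longrightarrow 0,
\end{equation*}
so $\eta^n\to\eta$ in $L^2(\mathbb{P}\otimes dt)$ for every $\mathbb{P}\in\mathcal{P}$. Since under each such $\mathbb{P}$ the canonical process $B$ is a continuous square-integrable martingale with respect to $\mathbb{F}^{\mathcal{P}}$ (with $\mathbb{P}$-quadratic variation dominated by $\overline{\sigma}^2 t$, by the characterisation of $\mathcal{P}$ recalled above), the classical Itô integral $\int_0^t\eta(s)\,dB_s$ is well defined in $L^2(\mathbb{P})$, proving the integrability claim. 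For simple $\eta^n$ the two integrals coincide pointwise on each sample path because both reduce to the same finite sum $\sum_j\xi_j^n(B_{t_{j+1}}-B_{t_j})$. Hence the classical Itô isometry under $\mathbb{P}$ gives
\begin{equation*}
E^{\mathbb{P}}\Bigl[\Bigl|\int_0^t\eta^n\,dB_s-\int_0^t\eta\,dB_s\Bigr|^2\Bigr]\le \int_0^T E^{\mathbb{P}}[|\eta^n-\eta|^2]\,ds\longrightarrow 0,
\end{equation*}
while by continuity of $I$ extended to $M_G^2(0,T)$, we also have $\int_0^t\eta^n\,d_GB_s\to\int_0^t\eta\,d_GB_s$ in $L_G^2(\Omega_T)$, hence in $L^2(\mathbb{P})$. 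Since the two limits agree on simple processes, uniqueness of limits in $L^2(\mathbb{P})$ yields the identification $\mathbb{P}$-a.s., for every $\mathbb{P}\in\mathcal{P}$.

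The genuinely delicate point is not the approximation itself but the aggregation aspect: one must exhibit a \emph{single} $\mathbb{F}^{\mathcal{P}}$-progressively measurable process, namely the G-integral $\int_0^{\cdot}\eta\,d_GB_s$, which serves as a $\mathbb{P}$-version of the classical Itô integral simultaneously for every $\mathbb{P}\in\mathcal{P}$, even when these probabilities are mutually singular. This is what makes the use of the universal filtration $\mathbb{F}^{\mathcal{P}}$ and the lemma on lifting $\widehat{\mathbb{F}}^{\mathbb{P}}$-progressively measurable processes to $\mathbb{F}$-progressive ones (invoked earlier from \cite{soner2011m}) essential; without this lifting, the naive construction would produce a family of $\mathbb{P}$-equivalence classes with no common representative. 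Once that measurability issue is handled, the $L^2$-convergences above — which are uniform in $\mathbb{P}$ through the G-expectation bound — collapse the family onto the aggregated process and close the proof.
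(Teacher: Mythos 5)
Your argument is sound, but note that the paper itself does not prove this lemma at all: it simply cites Proposition 3.3 of Soner--Touzi--Zhang, so your density argument is a genuinely self-contained alternative to an appeal to the literature. What you do is the standard route: on $M_G^{0,2}(0,T)$ the two integrals are literally the same Riemann sum, the domination $E^{\mathbb{P}}[\,\cdot\,]\le\widehat{E}[\,\cdot\,]$ converts $M_G^2$-convergence into $L^2(\mathbb{P}\otimes dt)$-convergence uniformly over $\mathbb{P}\in\mathcal{P}$, and uniqueness of $L^2(\mathbb{P})$-limits identifies the two extensions. This buys transparency (one sees exactly where the dual representation enters) at the cost of one implicit structural input that the citation would otherwise carry: you need that every $\mathbb{P}\in\mathcal{P}$ is a law under which the canonical process $B$ is a square-integrable martingale whose quadratic variation has density bounded by $\overline{\sigma}^2$ -- otherwise the classical It\^o integral on the right-hand side is not even defined, and your It\^o-isometry estimate has no meaning. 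The paper's Proposition quoted from Denis--Hu--Peng only asserts the existence of \emph{some} weakly compact representing family, so this martingale characterisation of $\mathcal{P}$ must be invoked explicitly (it is part of the construction in the cited references). Two smaller points: the isometry bound should carry the factor $\overline{\sigma}^2$ from $d\langle B\rangle_s\le\overline{\sigma}^2\,ds$ (you acknowledge this in words but drop it in the display), and your closing paragraph slightly overstates the aggregation difficulty -- since $\int_0^{\cdot}\eta\,d_GB_s$ is already constructed as a single element of $L_G^2(\Omega_T)$, i.e.\ a quasi-surely defined object, it automatically furnishes the common representative, and your limit argument is already the whole proof; no further lifting step is needed for the statement as formulated.
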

where the right-hand side is the standard It\^{o} integral. As a result, the quadratic variation process $\left\langle B\right\rangle _{t}^{G}$ defined in (\ref{2.1}) agrees with the standard quadratic variation process
quasi-surely. For more details see (Proposition 3.3, \cite{soner2011}). In the sequel, we will start by removing the notation G from both the G-stochastic integral and the G-quadratic variation.

Among the results of stability obtained in \cite{denis2011}, we quote the following which plays an essential part in our analysis.
\begin{lemma}\label{gh363}
If $\{\mathbb{P}%
_{n}\}_{n=1}^{\infty }$ $\subset $ $\mathcal{P}$ converges weakly to $
\mathbb{P}\in \mathcal{P}$. Then 
\begin{equation*}
E^{\mathbb{P}_{n}}[\xi ]\rightarrow E^{\mathbb{P}}[\xi ],\quad\text{for each }%
\xi \in \mathbb{L}_{G}^{1}(\Omega _{T}),
\end{equation*}
\end{lemma}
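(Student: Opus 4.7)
The plan is to reduce to cylinder functions by density and then conclude by the definition of weak convergence. The space $\mathbb{L}_G^1(\Omega_T)$ is by construction the completion of $Lip(\Omega_T)$ under the norm $\|\xi\|_1:=\widehat{E}[|\xi|]$, so it is natural to argue first on the dense subset and then extend uniformly in $n$.

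First I would verify the convergence on $Lip(\Omega_T)$. An element $\xi \in Lip(\Omega_T)$ has the form $\xi(\omega)=\varphi(\omega_{t_1},\ldots,\omega_{t_k})$ with $\varphi\in C_{b,Lip}(\mathbb{R}^{nk})$ and $t_1,\ldots,t_k\in[0,T]$. The evaluation map $\omega\mapsto(\omega_{t_1},\ldots,\omega_{t_k})$ is continuous from $(\Omega,\|\cdot\|_\infty)$ into $\mathbb{R}^{nk}$, so $\xi$ is a bounded continuous function on $\Omega$. Since $\mathbb{P}_n \rightharpoonup \mathbb{P}$ weakly on $\Omega$, the Portmanteau definition gives
\begin{equation*}
E^{\mathbb{P}_n}[\xi]\longrightarrow E^{\mathbb{P}}[\xi].
\end{equation*}

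Next I would promote this to every $\xi\in\mathbb{L}_G^1(\Omega_T)$ by a three-epsilon argument. Fix $\varepsilon>0$ and pick $\xi_\varepsilon\in Lip(\Omega_T)$ with $\widehat{E}[|\xi-\xi_\varepsilon|]<\varepsilon$. For every $\mathbb{Q}\in\mathcal{P}$ one has
\begin{equation*}
\bigl|E^{\mathbb{Q}}[\xi]-E^{\mathbb{Q}}[\xi_\varepsilon]\bigr|\;\le\;E^{\mathbb{Q}}[|\xi-\xi_\varepsilon|]\;\le\;\widehat{E}[|\xi-\xi_\varepsilon|]\;<\;\varepsilon,
\end{equation*}
where the last inequality uses the representation $\widehat{E}[\cdot]=\sup_{\mathbb{Q}\in\mathcal{P}}E^{\mathbb{Q}}[\cdot]$ from the Denis--Hu--Peng proposition quoted above. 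Applying this with $\mathbb{Q}=\mathbb{P}_n$ (uniformly in $n$) and with $\mathbb{Q}=\mathbb{P}$, and then inserting the cylinder convergence in the middle,
\begin{equation*}
\bigl|E^{\mathbb{P}_n}[\xi]-E^{\mathbb{P}}[\xi]\bigr|
\;\le\;2\varepsilon+\bigl|E^{\mathbb{P}_n}[\xi_\varepsilon]-E^{\mathbb{P}}[\xi_\varepsilon]\bigr|,
\end{equation*}
so taking $\limsup_{n\to\infty}$ yields $\limsup_n|E^{\mathbb{P}_n}[\xi]-E^{\mathbb{P}}[\xi]|\le 2\varepsilon$, and since $\varepsilon$ is arbitrary the lemma follows.

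The only delicate point is the uniform-in-$n$ control of $|E^{\mathbb{P}_n}[\xi-\xi_\varepsilon]|$; this is exactly where the sublinear envelope representation is essential, because without it one could not bound $E^{\mathbb{P}_n}[|\xi-\xi_\varepsilon|]$ uniformly in $n$ by a single quantity depending only on the $L_G^1$-approximation. Everything else is density in $\mathbb{L}_G^1(\Omega_T)$ plus the Portmanteau theorem on the Polish space $(\Omega,\|\cdot\|_\infty)$; no fine property of the capacity $c$ or of the quadratic variation is needed.
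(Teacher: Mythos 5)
Your proof is correct. Note, however, that the paper itself offers no proof of this lemma: it is quoted verbatim from Denis--Hu--Peng (Lemma~29 of \cite{denis2011}), so there is no internal argument to compare against. Your density-plus-Portmanteau scheme is exactly the standard proof underlying that cited result: convergence on the cylinder functions $Lip(\Omega_T)$ follows from weak convergence since they are bounded and continuous on $(\Omega,d)$, and the extension to $\mathbb{L}_G^1(\Omega_T)$ rests on the uniform-in-$\mathbb{Q}$ bound $E^{\mathbb{Q}}[|\xi-\xi_\varepsilon|]\le\widehat{E}[|\xi-\xi_\varepsilon|]$ coming from the sublinear envelope representation. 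The only point worth making explicit is that this representation is applied to $|\xi-\xi_\varepsilon|$, which does lie in $\mathbb{L}_G^1(\Omega_T)$ because $Lip(\Omega_T)$ is stable under taking absolute values and the map $X\mapsto|X|$ is a contraction for the norm $\widehat{E}[|\cdot|]$; with that observation your argument is complete and self-contained.
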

for more details see (Lemma 29, \cite{denis2011}).\\

In view of the dual formulation of the G-expectation, we end this section
by giving the following one-dimensional G-Burkholder-Davis-Gundy (G-BDG in short) type
estimates.
\begin{lemma}
\label{g-bdg1} (\cite{gao2009}) Assume d = 1. For each\textit{\ }$p\geq 2,
\eta \in M_{G}^{p}(0,T)$ and $0\leq s\leq t\leq T$, such that
\begin{equation*}
\widehat{E}\left[ \sup_{s\leq u\leq t}\left\vert \int_{s}^{u}\eta
(r)dB_{r}\right\vert ^{p}\right] \leq C_{p}|t-s|^{\frac{p}{2}-1}\int_{s}^{t}%
\widehat{E}[|\eta (r)|^{p}|]dr,
\end{equation*}
\end{lemma}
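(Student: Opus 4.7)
The plan is to reduce the claim to the classical Burkholder--Davis--Gundy inequality under each probability measure $\mathbb{P}\in\mathcal{P}$ in the representing family, and then aggregate by taking the supremum.

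First, by the aggregation property (Lemma \ref{aggreg}), for every $\eta\in M_G^p(0,T)\subset M_G^2(0,T)$ and every $\mathbb{P}\in\mathcal{P}$ the G-stochastic integral $\int_s^u \eta(r)\,dB_r$ agrees $\mathbb{P}$-a.s.\ with the classical It\^o integral, and the G-quadratic variation $\langle B\rangle^G$ agrees with the classical $\langle B\rangle$ under $\mathbb{P}$. Thus under each $\mathbb{P}$ the process $u\mapsto\int_s^u \eta(r)\,dB_r$ is a genuine $\mathbb{P}$-martingale, and the classical BDG inequality gives a constant $\tilde C_p$ such that
\begin{equation*}
E^{\mathbb{P}}\!\left[\sup_{s\leq u\leq t}\Bigl|\int_s^u\eta(r)\,dB_r\Bigr|^p\right]
\leq \tilde C_p\, E^{\mathbb{P}}\!\left[\Bigl(\int_s^t|\eta(r)|^2\,d\langle B\rangle_r\Bigr)^{p/2}\right].
\end{equation*}

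Second, I would use the uniform bound on the volatility built into the G-framework: since $\Sigma=\{\gamma\gamma^{\ast}:\gamma\in\Gamma\}$ is bounded, there exists $\overline{\sigma}^2<\infty$ such that $d\langle B\rangle_r \leq \overline{\sigma}^2\,dr$ quasi-surely (this is the same bound that appeared in the isometry stated earlier). Substituting and then applying H\"older (or Jensen) on the finite measure $dr$ on $[s,t]$ yields
\begin{equation*}
E^{\mathbb{P}}\!\left[\Bigl(\int_s^t|\eta(r)|^2\,d\langle B\rangle_r\Bigr)^{p/2}\right]
\leq \overline{\sigma}^{p}\,|t-s|^{p/2-1}\,E^{\mathbb{P}}\!\left[\int_s^t|\eta(r)|^p\,dr\right].
\end{equation*}
By Fubini (applied under the fixed measure $\mathbb{P}$), the right-hand side equals $\overline{\sigma}^{p}|t-s|^{p/2-1}\int_s^t E^{\mathbb{P}}[|\eta(r)|^p]\,dr$.

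Finally, take the supremum over $\mathbb{P}\in\mathcal{P}$. The dual representation (Proposition of \cite{denis2006}) gives $\widehat{E}[\xi]=\sup_{\mathbb{P}\in\mathcal{P}}E^{\mathbb{P}}[\xi]$. Applying this to the left-hand side and using $\sup_{\mathbb{P}}\int_s^t E^{\mathbb{P}}[|\eta(r)|^p]\,dr \leq \int_s^t \sup_{\mathbb{P}}E^{\mathbb{P}}[|\eta(r)|^p]\,dr = \int_s^t \widehat{E}[|\eta(r)|^p]\,dr$ on the right, and absorbing $\overline{\sigma}^{p}$ into $\tilde C_p$ to get $C_p$, produces the stated inequality.

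The main technical point is the justification of the dual representation on $\sup_{s\leq u\leq t}|\int_s^u\eta\,dB_r|^p$: one needs this quantity to belong to $\mathbb{L}_G^1(\Omega_T)$ (or at least to be quasi-continuous with suitable integrability) so that $\widehat{E}[\cdot]=\sup_{\mathbb{P}}E^{\mathbb{P}}[\cdot]$ applies. The standard remedy is to first prove the inequality for simple processes $\eta\in M_G^{0,p}(0,T)$, where every step above is transparent and measurability/integrability are immediate, and then extend to all of $M_G^p(0,T)$ by the density of simple processes in the $M_G^p$-norm together with the continuity of the It\^o integral map $I:M_G^2(0,T)\to L_G^2(\Omega_T)$ combined with the inequality itself to pass to the limit.
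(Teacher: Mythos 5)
The paper offers no proof of this lemma at all: it is quoted verbatim from \cite{gao2009}, so there is nothing internal to compare your argument against. Your sketch is, however, essentially the standard derivation of the G-BDG estimate found in that literature: reduce to the classical Burkholder--Davis--Gundy inequality under each $\mathbb{P}\in\mathcal{P}$ via aggregation, use the quasi-sure bound $d\langle B\rangle_r\leq\overline{\sigma}^{2}\,dr$ and H\"older in $r$ to produce the factor $|t-s|^{p/2-1}$, and pass to the supremum over $\mathcal{P}$ using the dual representation, with the simple-process-plus-density step handling integrability and quasi-continuity. The steps are all sound, and you correctly identified the only delicate point (applying $\widehat{E}=\sup_{\mathbb{P}}E^{\mathbb{P}}$ to $\sup_{s\le u\le t}|\int_s^u\eta\,dB_r|^p$) together with the standard remedy. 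The one input you use without comment is that the canonical process is a (local) martingale under every $\mathbb{P}\in\mathcal{P}$ with quadratic-variation density valued in $\Sigma$; this is true for the Denis--Hu--Peng representing family and is implicit in the paper's aggregation lemma, but it is worth stating explicitly since the dual-representation proposition as quoted in the paper asserts only the identity $\widehat{E}[\xi]=\sup_{\mathbb{P}\in\mathcal{P}}E^{\mathbb{P}}[\xi]$ and not the martingale property.
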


where $C_{p}>0$ is a constant depending only on $p$ and $T.$

\begin{lemma}
\label{g-bdg2} (\cite{gao2009}) \textit{For each }$p\geq 1$\textit{\ and }$%
\eta \in M_{G}^{p}(0,T),$\textit{\ then there exists a positive constant }$\overline{\sigma }$\textit{\ such that }$\frac{d\langle B\rangle _{t}}{dt}\leq 
\overline{\sigma }$\textit{\ }$q.s.,$\textit{ we have } 
\begin{equation*}
\widehat{E}\left[ \sup_{s\leq u\leq t}\left\vert \int_{s}^{u}\eta
(r)d\left\langle B\right\rangle _{r}\right\vert ^{p}\right] \leq \overline{%
\sigma }^{p}|t-s|^{p-1}\int_{s}^{t}\widehat{E}[|\eta (r)|^{p}]dr.
\end{equation*}
\end{lemma}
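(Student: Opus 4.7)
The plan is to prove this as a direct pathwise inequality, since the bound $d\langle B\rangle_r/dr \leq \overline{\sigma}$ quasi-surely lets us dominate the Stieltjes integral with respect to $\langle B\rangle$ by a Lebesgue integral, after which the estimate reduces to an elementary Hölder step plus the Fubini-type bound for $\widehat{E}$ already recorded in the excerpt.

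First I would argue pathwise on a set of full capacity. Since $\frac{d\langle B\rangle_r}{dr} \leq \overline{\sigma}$ $q.s.$, for every $u\in[s,t]$ we have
\begin{equation*}
\left|\int_s^u \eta(r)\,d\langle B\rangle_r\right| \;\leq\; \int_s^u |\eta(r)|\,\frac{d\langle B\rangle_r}{dr}\,dr \;\leq\; \overline{\sigma}\int_s^u |\eta(r)|\,dr \;\leq\; \overline{\sigma}\int_s^t |\eta(r)|\,dr,
\end{equation*}
where the last bound removes the dependence on $u$. Taking the supremum over $u\in[s,t]$ on the left is therefore free, yielding
\begin{equation*}
\sup_{s\leq u\leq t}\left|\int_s^u \eta(r)\,d\langle B\rangle_r\right|^p \;\leq\; \overline{\sigma}^{\,p}\left(\int_s^t |\eta(r)|\,dr\right)^{p} \quad q.s.
\end{equation*}

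Next, I would apply Hölder's inequality (or Jensen, since $x\mapsto x^p$ is convex for $p\geq 1$) to the Lebesgue integral to obtain
\begin{equation*}
\left(\int_s^t |\eta(r)|\,dr\right)^{p} \;\leq\; |t-s|^{p-1}\int_s^t |\eta(r)|^{p}\,dr,
\end{equation*}
which is the source of the $|t-s|^{p-1}$ factor in the claimed bound.

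Finally, applying $\widehat{E}$ to both sides and invoking monotonicity together with the already-stated inequality $\widehat{E}\bigl[\int_s^t |\eta(r)|^p\,dr\bigr] \leq \int_s^t \widehat{E}[|\eta(r)|^p]\,dr$ from the earlier lemma produces the desired estimate. The only subtle point, rather than the main obstacle, is justifying the pathwise bound $|\!\int \eta\,d\langle B\rangle| \leq \int|\eta|\,d\langle B\rangle$ for $\eta\in M_G^p(0,T)$; this is done by first verifying it for simple processes in $M_G^{0,p}(0,T)$, where it is immediate from the definition of the Stieltjes sum and the monotonicity of $\langle B\rangle$, and then extending by density using continuity of the integral mapping $\mathcal{J}_{0,T}$ on $M_G^p(0,T)$.
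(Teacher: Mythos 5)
Your argument is correct and is essentially the standard proof of this estimate — the paper itself gives no proof but defers to \cite{gao2009}, where the derivation is exactly this: dominate the Stieltjes integral pathwise via $d\langle B\rangle_r/dr\leq\overline{\sigma}$ q.s., apply H\"older's inequality to produce the factor $|t-s|^{p-1}$, and conclude with monotonicity of $\widehat{E}$ together with the Fubini-type bound $\widehat{E}[\int_s^t|\eta(r)|^p\,dr]\leq\int_s^t\widehat{E}[|\eta(r)|^p]\,dr$. Your closing remark about first checking the pathwise domination on simple processes and extending by density is the right way to make the argument rigorous for general $\eta\in M_G^p(0,T)$.
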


We study the existence of optimal control problem for $n$-dimensional
G-SDDE. We are concerned with the controlled system described by a G-SDDE, which
is given by
\begin{eqnarray}
\left\{ 
\begin{aligned}
dX\left( t\right)&=b\left( t,X\left( t\right) ,X\left( t-\tau \right),u\left( t\right) ,u\left( t-\tau \right) \right) dt \\
&+\gamma \left( t,X\left( t\right) ,X\left( t-\tau
\right) ,u\left( t\right) ,u\left( t-\tau \right) \right) d\left\langle B\right\rangle _{t} \\ 
&+\sigma \left( t,X\left( t\right) ,X\left( t-\tau
\right) \right) dB_{t},\quad t\in \left[ 0,T\right],\\ 
X\left( t\right) &=\eta \left( t \right),\quad t\in \left[ -\tau ,0\right] ,\tau >0,
\end{aligned}%
\right. \label{3.0}
\end{eqnarray}

where $\eta=\left\{ \eta \left( \theta \right) \right\}_{-\tau \leq \theta \leq 0}$ is $\widehat{\mathcal{F}}_{0}^{\mathcal{P}}-$adapted $
C\left( \left[ -\tau ,0\right] ;\mathbb{R}^{n}\right) $-value random variable, where $C=C([-\tau, 0]; \mathbb{R}^{n})$ is a space of continuous functions equipped with the norm $\left \vert \phi \right \vert_{C}=\underset{\theta\in [-\tau, 0]}{\sup}\left \vert \phi (\theta)\right \vert_{C}$,
as well as $\eta \in M_{\mathbb{G}}^{2}\ \left( \left[
-\tau ,0\right];\mathbb{R}^{n}\right).$ Then, the integral equation is given by
\begin{eqnarray}
X\left( t\right) &=&\eta \left( 0 \right)+\int_{0}^{t}b\left( s,X\left( s\right) ,X\left(
s-\tau \right) ,u\left( s\right) ,u\left( s-\tau \right) \right) ds \nonumber \\
&&+\int_{0}^{t}\gamma \left( s,X\left( s\right) ,X\left( s-\tau \right)
,u\left( s\right) ,u\left( s-\tau \right) \right) d\left\langle
B\right\rangle _{s}  \nonumber \\
&&+\int_{0}^{t}\sigma \left( s,X\left( s\right) ,X\left( s-\tau \right)
\right) dB_{s},  \label{3.1}
\end{eqnarray}

with $\left( \left\langle B\right\rangle _{t}\right) _{t\geq 0}$ is a quadratic
variation process of $m$-dimentional G-Brownian motion 
defined on a space of sublinear expectation $\left( \Omega ,\mathcal{H},
\widehat{E}\right) $, with an universal filtration $\mathbb{F}^{\mathcal{P}%
}=\left\{ \widehat{\mathcal{F}}_{t}^{\mathcal{P}}\right\} _{t\geq 0},$ and $u\left( t\right), u\left( t-\tau \right) $ $\in\mathbb{ A},$ are called
the strict control variables for each $t\in \left[ 0,T\right] .$ 
Moreover, the functions
\begin{eqnarray*}
b &:&\left[ 0,T\right] \times \mathbb{R}^{n}\times \mathbb{R}^{n}\times 
\mathbb{A}\times \mathbb{A} \rightarrow \mathbb{R}^{n}, \\
\gamma &:&\left[ 0,T\right] \times \mathbb{R}^{n}\times \mathbb{R}^{n}\times 
\mathbb{A}\times \mathbb{A} \rightarrow \mathbb{R}^{m\times n},\\
\sigma &:&\left[ 0,T\right] \times \mathbb{R}^{n}\times \mathbb{R}^{n} \rightarrow \mathbb{R}^{m\times n},
\end{eqnarray*}
as well as $\sigma \left( ., x, y\right),$ $b\left(., x, y, u\left( .\right), u\left( .-\tau\right) \right),$ $\gamma \left(., x, y, u\left( .\right), u\left( .-\tau\right) \right) \in \\
M_{G}^{2}(%
\left[ 0,T\right]; \mathbb{R}^{n})$ for each $x,y\in \mathbb{R}^{n}$ and for
each strict control $u$.

  \section{Relaxed control of the G-SDDE.}
\label{sec:3}The strict control problem may not have an optimal solution
in the absence of convexity assumptions because $\mathbb{A}$ is too small
to contain a minimizer. Then the strict control space must then be injected
into a larger space with good compactness and convexity properties. $
\mathcal{P}\left( \mathbb{A}\right) $ is the space of probability measures
on $\mathbb{A}$, endowed with its Borel $\sigma $-algebra $\mathcal{B}(%
\mathbb{A})$ whith $\mathbb{A}$ is the set of compact polish space.

For that we consider the class of G-relaxed stochastic controls on 
$(\Omega ,\mathcal{H},\widehat{E})$.
\begin{definition}[G-Relaxed stochastic control]
An $\mathbb{F}^{\mathcal{P}}$-progressively measurable random measure of the
form $q(\omega ,dt,d\xi )=\mu _{t}(\omega ,d\xi )dt$ is a G-relaxed
stochastic control on $(\Omega ,\mathcal{H},\widehat{E})$, such that$\ $%
\begin{eqnarray}
X\left( t\right) &=&\eta \left( 0 \right)+\int_{0}^{t}\int_{\mathbb{A}}b\left( s,X\left(
s\right) ,X\left( s-\tau \right) ,\xi _{1},\xi _{2}\right) \mu _{s}(d\xi
)ds \nonumber\\
&+&\int_{0}^{t}\int_{\mathbb{A}}\gamma \left( s,X\left(
s\right) ,X\left( s-\tau \right) ,\xi _{1},\xi _{2}\right) \mu _{s}(d\xi
)d\left\langle B\right\rangle _{s} \nonumber\\ 
&+&\int_{0}^{t}\sigma \left( s,X\left( s\right) ,X\left(
s-\tau \right) \right) dB_{s}. \label{3.3}
\end{eqnarray}
\end{definition}
where $(\xi_1,\xi_2)=\xi\in \mathbb{A}$.

It is important to note that, the set $\mathcal{U}([0,T])=\mathcal{U}$ of 'strict' controls constituted of $
\mathbb{F}^{\mathcal{P}}$-adapted processes each strict control $u$ taking values in the set $\mathbb{A}$, can be
considered as a G-relaxed control into the set $\mathcal{R}$ of G-relaxed
controls via the mapping
\begin{equation}
\Phi :\quad \mathcal{U}\ni u\mapsto \Phi (u)(dt,d\xi )=\delta _{\left(
u(t),u(t-\tau )\right) }\left( d\xi \right).dt\in \mathcal{R},  \label{3.4}
\end{equation}
where $\delta _{\left( u(t),u(t-\tau )\right) }$ is a Dirac measure charging 
$u(t),$ $u(t-\tau )$ for each $t.$
\begin{remark}
We mean by \textquotedblleft the process $q(\omega ,dt,d\xi )$ the $\mathbb{F}%
^{\mathcal{P}}$- progressively measurable\textquotedblright\ that for every $%
C\in \mathcal{B}(\mathbb{A})$\ and every $t\in \lbrack 0,T]$, the mapping $[0,t]\times \Omega \rightarrow \lbrack 0,1]$ described by $(s,\omega )\mapsto
\mu _{s}(\omega ,C)$ is $\mathcal{B}([0,t])\otimes \widehat{\mathcal{F}}_{t}^{\mathcal{
P}}$-measurable, and the process $(\mu _{t}(C))_{t\in \lbrack
0,T]}$ is $\mathbb{F}^{\mathcal{P}}$-adapted.
\end{remark}

The class of G-relaxed stochastic controls is denoted by $\mathcal{R}$.

To consider the control problem (\ref{3.1}), we must first study the existence and uniqueness of the solution to the following equation

\begin{eqnarray}
X\left( t\right) &=&\eta \left( 0 \right)+\int_{0}^{t}\int_{\mathbb{A}}b\left( s,X\left(
s\right) ,X\left( s-\tau \right) ,\xi _{1},\xi _{2}\right) \mu _{s}(d\xi )ds \nonumber\\ 
&+&\int_{0}^{t}\int_{\mathbb{A}}\gamma \left( s,X\left(
s\right) ,X\left( s-\tau \right) ,\xi _{1},\xi _{2}\right) \mu _{s}(d\xi)d\left\langle B\right\rangle _{s} \nonumber\\ 
&+&\int_{0}^{t}\sigma \left( s,X\left( s\right) ,X\left(
s-\tau \right) \right) dB_{s},\quad t\in \left[ 0,T\right], \label{3.5}
\end{eqnarray}

where $\mu _{t}(d\xi )=\delta _{\left( u(t),u(t-\tau )\right) }(d\xi ).$
The following assumptions are required to ensure the existence and
uniqueness of the solution of the equation $\left( \ref{3.1}\right).$

\begin{description}
\item[$(A_{1})$] The functions $b,\gamma $ and  $\sigma $ are bounded and Lipschitz continuous with respect to the space variables $x,y$ uniformly in $(t,u,u_{\tau })$.\\
There exist $K_{1}, K_{2}>0$ such that 
\begin{multline*}
\left\vert b\left( t,x,y,u,u_{\tau }\right) -b\left( t,x^{\prime },y^{\prime
},u,u_{\tau }\right) \right\vert ^{2}+\left\vert \gamma \left(
t,x,y,u,u_{\tau }\right) -\gamma \left( t,x^{\prime },y^{\prime },u,u_{\tau
}\right) \right\vert ^{2} \\
+\left\vert \sigma \left( t,x,y\right) -\sigma \left( t,x^{\prime
},y^{\prime }\right) \right\vert ^{2}\leq K_{1}\left( \left\vert x-x^{\prime
}\right\vert ^{2}+\left\vert y-y^{\prime }\right\vert ^{2}\right),
\end{multline*}
and
\begin{multline*}
\left\vert b\left( t,x,y,u,u_{\tau }\right)\right\vert ^{2}+\left\vert \gamma \left(
t,x,y,u,u_{\tau }\right)\right\vert ^{2}
+\left\vert \sigma \left( t,x,y\right) \right\vert ^{2}\leq K_{2}\left(1 + \left\vert x\right\vert ^{2}+\left\vert y\right\vert ^{2}\right),
\end{multline*}
\end{description}
for each $x,y,x^{\prime },y^{\prime
}\in \mathbb{R}^{n}$ and $u,u_{\tau }\in \mathbb{A}$.

 By the result of  \cite{Fei2019}, under our assumptions $(A_{1})$ and $(A_{2})$ the G-SDDE\thinspace (\ref{3.1}) has
a unique solution $(X_{t}^{\mu})_{t\geq 0}$, for each fixed control.
\section{Approximation and existence of G-relaxed optimal control}
\label{sec:4}

We consider a relaxed control problem\thinspace (\ref{3.3}). Let $
X^{\mu }$ denotes the solution of equation\thinspace ( \ref{3.3})
related to the G-relaxed control. Let establish the existence of a
minimizer of the cost for finite horizon $\left( T<\infty \right) $
corresponding to $\mu $.
\begin{center}
\begin{equation*}
J(\mu )=\widehat{E}\left[ \int_{0}^{T}\int_{\mathbb{A}}\mathcal{L}(t,X^{\mu
}\left( t\right) ,X^{\mu }\left( t-\tau \right) ,\xi _{1},\xi _{2})\mu
_{t}(d\xi )dt+\Psi (X^{\mu }\left( T\right) )\right],
\end{equation*}
\end{center}
where the functions,
\begin{eqnarray*}
\mathcal{L} &:&\left[ 0,T\right] \times \mathbb{R}^{n}\times \mathbb{R}%
^{n}\times \mathbb{A}\times \mathbb{A}\rightarrow \mathbb{R}, \\
\Psi &:&\mathbb{R}^{n}\mathcal{\longrightarrow }\mathbb{R},
\end{eqnarray*}%
fulfill the following important assumption
\begin{description}
\item[$\left( A_{2}\right) $] The functions $\mathcal{L}$, $\Psi $ are bounded, and the coefficient $\Psi$ is Lipschitz continuous, and $\mathcal{L}$ is Lipschitz continuous with respect to the space variables $x,y$ uniformly in time and control $(t,u,u_{\tau })$.
\end{description}
We recall that in the strict control problem
\begin{equation}
J(u)=\widehat{E}\left[ \int_{0}^{T}\mathcal{L}(t,X^{u}\left( t\right)
,X^{u}\left( t-\tau \right) ,u\left( t\right) ,u\left( t-\tau \right)
)\,dt +\Psi (X^{u}\left( T\right) )\right].  \label{3.16}
\end{equation}
From the set $\mathcal{U}$,
\begin{eqnarray}
X^{u}\left( t\right) &=&\eta(0)+\int_{0}^{t}b(s,X^{u}\left( s\right)
,X^{u}\left( s-\tau \right) ,u\left( s\right) ,u\left( s-\tau \right) )ds 
\nonumber \\
&&+\int_{0}^{t}\gamma (s,X^{u}\left( s\right) ,X^{u}\left( s-\tau \right)
,u\left( s\right) ,u\left( s-\tau \right) )d\left\langle B\right\rangle
_{s}  \nonumber \\
&&+\int_{0}^{t}\sigma \left( s,X^{u}\left( s\right) ,X^{u}\left( s-\tau
\right) \right)dB_{s},  \label{3.17}
\end{eqnarray}%
then, we have
\begin{eqnarray}
X^{\mu }\left( t\right) &=&\eta(0)+\int_{0}^{t}\int_{\mathbb{A}}b(s,X^{\mu
}\left( s\right) ,X^{\mu }\left( s-\tau \right) ,\xi _{1},\xi _{2})\mu
_{s}(d\xi )ds \nonumber \\
&&+\int_{0}^{t}\int_{\mathbb{A}}\gamma (s,X^{\mu }\left( s\right) ,X^{\mu
}\left( s-\tau \right) ,\xi _{1},\xi _{2})\mu _{s}(d\xi )d\left\langle
B\right\rangle _{s}   \nonumber\\
&&+\int_{0}^{t}\sigma \left( s,X^{\mu }\left( s\right) ,X^{\mu }\left(
s-\tau \right) \right) dB_{s}.  \label{3.18}
\end{eqnarray}
Moreover, for each $\mu $ $\in $ $\mathcal{R}$,

\begin{eqnarray}
\chi ^{\mu } &:&=\int_{0}^{T}\int_{\mathbb{A}}\mathcal{L}(t,X^{\mu }\left(
t\right) ,X^{\mu }\left( t-\tau \right) ,\xi _{1},\xi _{2})\mu _{t}(d\xi )dt
\nonumber \\
&&+\Psi (X^{\mu }\left( T\right) )\in \mathbb{L}_{G}^{1}\left( \Omega
_{T}\right) .  \label{3.19-0}
\end{eqnarray}

We use the relaxed control problem to introduce the definition of stable
convergence to define the next lemma, which according to the classical
Chattering lemma states that each G-relaxed control in $\mathcal{R}$ can
be approximated by a sequence of strict controls from $\mathcal{U}$.
\begin{definition}[\textbf{stable convergence}]
\label{stbcvg} (\cite{Elgroud2022}) Let $\mu ^{n},\mu \in \mathcal{R},n\in 
\mathbb{N}^{\ast }$. We say that, we have a stable convergence, if for any
continuous function $f:\left[ 0,T\right] \times \mathbb{A}\rightarrow 
\mathbb{R}^{n},$ we have%
\begin{equation}
\underset{n\rightarrow \infty }{\lim }\int_{\left[ 0,T\right] \times \mathbb{%
A}}f\left( t,\xi _{1},\xi _{2}\right) \mu ^{n}\left( dt,d\xi \right) =\int_{
\left[ 0,T\right] \times \mathbb{A}}f\left( t,\xi _{1},\xi _{2}\right) \mu
\left( dt,d\xi \right).  \label{3.19}
\end{equation}
\end{definition}
\begin{lemma}[G-Chattering lemma]
\label{gchtrm}Let the process $(\mu _{t})_{t\geq 0}$ is an $\mathbb{F}^{
\mathcal{P}}$-\\progressively measurable with values in $\mathcal{P}(
\mathbb{A})$. Then there exists a sequences $(u^{n}(t),u^{n}\left( t-\tau
\right) )_{n\geq 0}$ of $\mathbb{F}^{\mathcal{P}}$-progressively measurable
processes with values in $\mathbb{A},$ such that
\begin{equation*}
\mu _{t}^{n}(d\xi )dt=\delta_{\left( u^{n}\left( t\right) ,u^{n}\left( t-\tau \right)\right) }\left( d\xi \right)dt\rightarrow\mu_{t}(d\xi )dt,
\end{equation*}
converges in terms of stable convergence (thus weakly).
\end{lemma}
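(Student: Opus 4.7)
The plan is to prove this by a time-discretization plus empirical-approximation scheme, which is the standard route for chattering lemmas. First I would fix $n\in\mathbb{N}$, partition $[0,T]$ into subintervals $I_k^n=[t_k^n,t_{k+1}^n)$ of width $\Delta_n=T/n$, and average the measure-valued process over each piece:
$$
\bar{\mu}_k^n(\omega,\cdot):=\frac{1}{\Delta_n}\int_{t_k^n}^{t_{k+1}^n}\mu_s(\omega,\cdot)\,ds\in\mathcal{P}(\mathbb{A}).
$$
The piecewise-constant random measure $\sum_k \bar{\mu}_k^n(\omega,\cdot)\,\mathbb{I}_{I_k^n}(t)\,dt$ already converges stably to $\mu_t(d\xi)dt$, since for any continuous $f:[0,T]\times\mathbb{A}\to\mathbb{R}^n$ the map $t\mapsto\int_{\mathbb{A}} f(t,\xi)\mu_t(d\xi)$ is in $L^1([0,T])$ quasi-surely and Lebesgue differentiation applies.

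Next, since $\mathbb{A}$ is a compact Polish space, $\mathcal{P}(\mathbb{A})$ equipped with the weak topology is again a compact Polish space. For each $\nu\in\mathcal{P}(\mathbb{A})$ and each $m\geq 1$ I would select $m$ points $\xi_1(\nu),\dots,\xi_m(\nu)\in\mathbb{A}$ such that the empirical measures $\frac{1}{m}\sum_{j=1}^{m}\delta_{\xi_j(\nu)}$ converge weakly to $\nu$ as $m\to\infty$. The crucial point is that these selections can be made Borel measurable as functions of $\nu$; this follows from a Kuratowski--Ryll-Nardzewski type measurable selection applied to the closed multifunction of $m$-point empirical approximations, combined with the separability of $C(\mathbb{A})$.

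I would then subdivide each $I_k^n$ further into $n$ equal sub-subintervals $I_{k,j}^n$ and set
$$
u^n(t,\omega):=\xi_j\bigl(\bar{\mu}_{k-1}^n(\omega,\cdot)\bigr)\quad\text{for }t\in I_{k,j}^n,\ k\geq 1,
$$
with an arbitrary fixed value of $\mathbb{A}$ on $I_0^n$. The one-step backward shift in the conditioning interval is what guarantees $\mathbb{F}^{\mathcal{P}}$-progressive measurability of $u^n$, since $\bar{\mu}_{k-1}^n$ is $\widehat{\mathcal{F}}^{\mathcal{P}}_{t_k^n}$-measurable; its effect vanishes as $n\to\infty$. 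The delayed process $u^n(t-\tau)$ is inherited from the same $u^n$ by time-shift, so the Dirac at the pair $(u^n(t),u^n(t-\tau))$ is well defined.

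Finally, for the stable convergence, given a continuous $f:[0,T]\times\mathbb{A}\to\mathbb{R}^n$, I would split
$$
\int_{[0,T]\times\mathbb{A}}f(t,\xi_1,\xi_2)\bigl(\mu^n-\mu\bigr)(dt,d\xi)
$$
into three pieces corresponding to: (i) replacing the Dirac empirical measure $\delta_{u^n(t)}$ on $I_k^n$ by $\frac{1}{n}\sum_{j}\delta_{\xi_j(\bar{\mu}_{k-1}^n)}$, controlled by uniform continuity of $f$ in $t$ on the compact domain $[0,T]\times\mathbb{A}$; (ii) the weak convergence $\frac{1}{n}\sum_{j}\delta_{\xi_j(\nu)}\to\nu$ from the empirical selection; and (iii) the convergence of the averaged piecewise-constant measure to $\mu_t\,dt$ via Lebesgue differentiation. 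Each term vanishes as $n\to\infty$, giving \eqref{3.19}. The main obstacle in the whole argument is the measurable selection in the second step: without Borel measurability of $\nu\mapsto\xi_j(\nu)$, progressive measurability of the approximating strict controls $u^n$ fails, and the chattering would leave the class $\mathcal{U}$.
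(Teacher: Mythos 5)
The paper does not actually carry out this construction: its entire proof is a citation of the classical chattering lemma (the lemma after Theorem 3 in Fleming, 1984) together with the remark that the pathwise construction extends to $\mathbb{F}^{\mathcal{P}}$-progressively measurable controls under the universal filtration. Your proposal supplies the construction behind that citation -- time discretization, averaging of $\mu$ over each subinterval, a measurable selection of empirical $m$-point approximations, and a backward shift of one block to preserve adaptedness -- and in the non-delayed setting (approximating $\mu_t(d\xi)dt$ by $\delta_{u^n(t)}(d\xi)dt$) this is correct and is the standard argument. The adaptedness device and the three-term error split are the right ingredients, and your closing remark that the measurable selection is the point where progressive measurability could fail is accurate.

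However, there is a genuine gap specific to the delayed statement, and your error decomposition does not close it. Here $\xi=(\xi_1,\xi_2)$ and $\mu_t$ is a measure on \emph{pairs}, while the approximating object is $\delta_{(u^n(t),\,u^n(t-\tau))}(d\xi)\,dt$, whose second coordinate at time $t$ is rigidly tied to the first coordinate at time $t-\tau$ of the same process. Your construction chooses $u^n(t)$ so that the occupation measure of $u^n(t)$ alone approximates (the first marginal of) $\bar\mu^n_{k-1}$; steps (i)--(iii) then control only $\delta_{u^n(t)}$, not the joint empirical law of the pair $\left(u^n(t),u^n(t-\tau)\right)$. In the limit, any relaxed control obtained this way must satisfy the consistency constraint that the $\xi_2$-marginal of $\mu_t$ equals the $\xi_1$-marginal of $\mu_{t-\tau}$, and moreover the coupling between $\xi_1$ and $\xi_2$ is dictated by the dynamics of $u^n$ across the lag $\tau$; an arbitrary $\mathcal{P}(\mathbb{A})$-valued progressively measurable process need not be attainable. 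To repair this you would have to either (a) restrict $\mathcal{R}$ to relaxed controls of the delayed form (which is what the reduction to Fleming's lemma implicitly does, since there one chatters a single $\mathcal{P}$-valued process and the pair structure is induced by the time shift), or (b) perform the empirical selection jointly, choosing the points on each block $I^n_{k,j}$ conditionally on the values already assigned on the block containing $t-\tau$, so that the pair empirical measure converges to $\mu_t$. As written, the argument proves a weaker statement than the one claimed, and this is also the one delicate point that the paper's citation-only proof leaves unexamined.
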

\begin{proof}
Given the G-relaxed control $\mu$ which is $\mathbb{F}^{\mathcal{P}}$-progressively measurable, the precise pathwise development of the approximating sequence \\$(\delta_{\left( u^{n}\left( t\right),u^{n}\left( t-\tau \right)\right) }\left( d\xi \right)dt)_{n\geq0}$ of G-relaxed control $\mu_{t}(d\xi )dt$ in $\mathcal{R}$ (see lemma after theorem 3, \cite{fleming1984}), which easily need extend to consider the strict controls $(u_{n})_{n}$ $\mathbb{F}^{\mathcal{P}}$-progressively measurable.
\end{proof}

Let $X^{n}$ the corresponding solution of G-SDDE (\ref{3.1}), associated with $u^{n}$
$\left( \text{or
}\delta _{\left( u^{n}(t),u^{n}(t-\tau )\right) }(d\xi )\right),$ and
satisfy that
\begin{eqnarray}
X^{n}\left( t\right) &=&\eta(0)+\int_{0}^{t}b(s,X^{n}\left( s\right)
,X^{n}\left( s-\tau \right) ,u^{n}\left( s\right) ,u^{n}\left( s-\tau
\right) )ds  \nonumber \\
&&+\int_{0}^{t}\gamma (s,X^{n}\left( s\right) ,X^{n}\left( s-\tau \right)
,u^{n}\left( s\right) ,u^{n}\left( s-\tau \right) )d\left\langle
B\right\rangle _{s}  \nonumber \\
&&+\int_{0}^{t}\sigma \left( s,X^{n}\left( s\right) ,X^{n}\left( s-\tau
\right) \right) dB_{s}.  \label{3.20}
\end{eqnarray}%
\bigskip The following important lemma prove the stability results for the G-SDDE (\ref{3.18}), and gives $J(\mu )=\underset{\mathbb{P}%
\in \mathcal{P}}{\sup }J^{\mathbb{P}}(\mu )$ for every$\ \mathbb{P}\in 
\mathcal{P}$.
\begin{lemma}[stability results]
\label{lsr37} Suppose that $b,\gamma $ and $\sigma $ satisfy assumption $%
(A_{1})$. Let $\mu $ be a G-relaxed control, and let $\left( u^{n}\right) 
$ be a sequence defined in Lemma \textbf{\ref{gchtrm}}. Then we have
\begin{description}
\item[$\left( i\right) $] For every$\ \mathbb{P}\in \mathcal{P},$ \textit{it
holds that} 
\begin{equation}
\underset{n\rightarrow \infty }{\lim }E^{\mathbb{P}}\left[ \underset{0\leq
t\leq T}{\sup }\left\vert X^{n}(t)-X^{\mu }(t)\right\vert^{2}\right]=0,
\label{3.23}
\end{equation}
and
\begin{equation}
\underset{n\rightarrow \infty }{\lim }\widehat{E}\left[ \underset{0\leq
t\leq T}{\sup }\left\vert X^{n}(t)-X^{\mu }(t)\right\vert^{2}\right]=0.
\label{3.24}
\end{equation}
\item[$\left( ii\right) $] Let $J(u^{n})$ and $J(\mu )$ are the corresponding cost functionals to $u^{n}$ and $\mu $ respectively. Then,
there exists a sub-sequence $\left( u^{n_{k}}\right) $ of $\left(
u^{n}\right) $ such that 
\begin{equation}
\underset{k\rightarrow \infty }{\lim }J(u^{n_{k}})=J(\mu ),  \label{3.25}
\end{equation}
and, for every$\ \mathbb{P}\in \mathcal{P}$
\begin{equation}
\ \underset{k\rightarrow \infty }{\lim }J^{\mathbb{P}}(u^{n_{k}})=J^{\mathbb{
P}}(\mu ).  \label{3.26}
\end{equation}%
\textit{Furthermore},
\begin{equation}
\inf_{u\in \mathcal{U}}J^{\mathbb{P}}(u)=\inf_{\mu \in \mathcal{R}}J^{
\mathbb{P}}(\mu ),  \label{3.27}
\end{equation}
\textit{then, there exists a }G\textit{-relaxed optimal control } $\widehat{\mu }_{
\mathbb{P}}\in \mathcal{R}$ such that%
\begin{equation}
J^{\mathbb{P}}(\widehat{\mu }_{\mathbb{P}})=\inf_{\mu \in \mathcal{R}}J^{%
\mathbb{P}}(\mu ).  \label{3.28}
\end{equation}
Since, by using the property of aggregation in Lemma $\ref{aggreg}$, under
the singularity for every $\mathbb{P}\in \mathcal{P}$, the G-SDDE $(\ref%
{3.18})$ becomes a standard SDDE, this result was confirmed by the technique
in \cite{bahlali2006}.
\end{description}
\end{lemma}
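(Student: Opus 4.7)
The plan is to reduce the G-dynamics, for each fixed $\mathbb{P}\in\mathcal{P}$, to a standard SDDE via the aggregation Lemma~\ref{aggreg}, and then to combine the Lipschitz hypothesis $(A_{1})$ with the G-BDG estimates (Lemmas~\ref{g-bdg1} and~\ref{g-bdg2}) in a delayed Gronwall argument. The piece beyond a standard stability proof is the ``measure error'' between integrating against $\delta_{(u^{n}(\cdot),u^{n}(\cdot-\tau))}(d\xi)\,ds$ and against $\mu_{s}(d\xi)\,ds$; this will be controlled by the pathwise stable convergence supplied by the G-Chattering Lemma~\ref{gchtrm} together with the boundedness built into $(A_{1})$ and $(A_{2})$.

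For Part (i), I first observe that on $[-\tau,0]$ we have $X^{n}\equiv X^{\mu}\equiv\eta$, so the estimate is trivial there. For $s\in[0,T]$, I split each $dt$- and $d\langle B\rangle_{s}$-integrand as a Lipschitz piece
\[
b(s,X^{n}(s),X^{n}(s-\tau),u^{n}(s),u^{n}(s-\tau))-b(s,X^{\mu}(s),X^{\mu}(s-\tau),u^{n}(s),u^{n}(s-\tau)),
\]
and a measure piece
\[
b(s,X^{\mu}(s),X^{\mu}(s-\tau),u^{n}(s),u^{n}(s-\tau))-\int_{\mathbb{A}} b(s,X^{\mu}(s),X^{\mu}(s-\tau),\xi_{1},\xi_{2})\,\mu_{s}(d\xi),
\]
with the analogous split for $\gamma$; the $\sigma$-difference produces only a Lipschitz piece, as $\sigma$ does not depend on the control. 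By $(A_{1})$ the squared Lipschitz piece is pointwise bounded by $K_{1}(|X^{n}(s)-X^{\mu}(s)|^{2}+|X^{n}(s-\tau)-X^{\mu}(s-\tau)|^{2})$, which after Lemmas~\ref{g-bdg1} and~\ref{g-bdg2} feeds a delayed Gronwall inequality, solved by iterating over $[0,\tau],[\tau,2\tau],\ldots$. The squared measure piece is uniformly bounded by $(A_{1})$ and, for each fixed $\omega$, tends to $0$ by Definition~\ref{stbcvg} applied to the continuous map $(s,\xi_{1},\xi_{2})\mapsto b(s,X^{\mu}(s,\omega),X^{\mu}(s-\tau,\omega),\xi_{1},\xi_{2})$. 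Lebesgue's dominated convergence under each $\mathbb{P}$ then yields \eqref{3.23}, and the bounded-convergence analogue for $\widehat{E}$, together with the $\mathbb{P}$-uniformity of all constants, yields \eqref{3.24}.

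For Part (ii), I use (i) to extract a subsequence $(u^{n_{k}})$ along which $\sup_{0\le t\le T}|X^{n_{k}}(t)-X^{\mu}(t)|\to 0$ quasi surely. Combining this with the Lipschitz and boundedness parts of $(A_{2})$ and the same stable-convergence argument applied to $(s,\xi_{1},\xi_{2})\mapsto\mathcal{L}(s,X^{\mu}(s,\omega),X^{\mu}(s-\tau,\omega),\xi_{1},\xi_{2})$ shows $\chi^{u^{n_{k}}}\to\chi^{\mu}$ in $\mathbb{L}_{G}^{1}(\Omega_{T})$; taking $\widehat{E}$ and $E^{\mathbb{P}}$ produces \eqref{3.25} and \eqref{3.26}. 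The inequality $\inf_{u}J^{\mathbb{P}}(u)\ge\inf_{\mu}J^{\mathbb{P}}(\mu)$ is immediate from the embedding $\Phi$ in \eqref{3.4}, while the reverse direction follows by applying \eqref{3.26} to any $\mu\in\mathcal{R}$; this is \eqref{3.27}. For \eqref{3.28}, under any fixed $\mathbb{P}$ the aggregation Lemma~\ref{aggreg} turns \eqref{3.18} into a standard SDDE, and one invokes the classical relaxed-control existence result in \cite{bahlali2006}, which applies because $\mathbb{A}$ is a compact Polish space and $\mathcal{R}$ is therefore weakly compact.

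The hardest step will be the measure-error estimate for the $d\langle B\rangle_{s}$-integral: the quadratic variation is random and $\mathbb{P}$-dependent, so transferring the pathwise stable convergence of the control measures through this integral forces one to rely on the uniform bound $\tfrac{d\langle B\rangle_{t}}{dt}\le\overline{\sigma}$ q.s.\ from Lemma~\ref{g-bdg2} rather than a plain $ds$-integration. A secondary subtlety is lifting per-$\mathbb{P}$ convergence to convergence under $\widehat{E}=\sup_{\mathbb{P}\in\mathcal{P}}E^{\mathbb{P}}$: because measures in $\mathcal{P}$ may be mutually singular, one needs uniformity in $\mathbb{P}$, which is afforded here by the uniform Lipschitz constants in $(A_{1})$, the boundedness of all coefficients, and the weak compactness of $\mathcal{P}$ used via Lemma~\ref{gh363}.
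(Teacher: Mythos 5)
Your skeleton for the per-measure statements is the same as the paper's: for each fixed $\mathbb{P}\in\mathcal{P}$ you reduce to a standard SDDE via Lemma \ref{aggreg}, split each integrand into a Lipschitz piece plus a measure-error piece, control the former with $(A_{1})$, the G-BDG estimates and Gronwall, and kill the latter by stable convergence plus classical dominated convergence under $\mathbb{P}$; this is exactly how \eqref{3.23} and \eqref{3.26} are obtained, and your handling of \eqref{3.27}--\eqref{3.28} (aggregation plus the classical relaxed-control existence result of Bahlali et al.) also matches the paper.

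The genuine gap is in how you pass from the per-$\mathbb{P}$ statements to the sublinear-expectation statements \eqref{3.24} and \eqref{3.25}. You invoke a ``bounded-convergence analogue for $\widehat{E}$'' and, for \eqref{3.25}, convergence of $\chi^{u^{n_k}}$ to $\chi^{\mu}$ in $\mathbb{L}^{1}_{G}(\Omega_{T})$; but no such convergence theorem is available, and the paper explicitly warns that dominated convergence (and even Fatou) fails for $\widehat{E}=\sup_{\mathbb{P}\in\mathcal{P}}E^{\mathbb{P}}$ because the measures in $\mathcal{P}$ may be mutually singular --- a bounded sequence tending to $0$ under every single $\mathbb{P}$ can have each term nearly maximized by a different measure, so the supremum need not converge. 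Uniform Lipschitz constants do not repair this, since the obstruction is not in the pathwise estimates but in the interchange of $\lim_{n}$ with $\sup_{\mathbb{P}}$. The missing idea, which the paper supplies, is an argument by contradiction: if $\widehat{E}[\zeta_{n}]\geq\eta>0$ along a subsequence (with $\zeta_{n}=\sup_{t}|X^{n}(t)-X^{\mu}(t)|^{2}$), choose $\mathbb{P}_{n}\in\mathcal{P}$ with $E^{\mathbb{P}_{n}}[\zeta_{n}]\geq\eta-\tfrac{1}{n}$, extract a weakly convergent subsequence $\mathbb{P}_{n_{k}}\rightarrow\mathbb{P}$ by weak compactness of $\mathcal{P}$, and use Lemma \ref{gh363} (valid because $\zeta_{n}\in\mathbb{L}^{1}_{G}(\Omega_{T})$) to deduce $\lim_{j}E^{\mathbb{P}}[\zeta_{n_{j}}]\geq\eta$, contradicting \eqref{3.23} for that single limit measure $\mathbb{P}$; the same device applied to $\chi^{u^{n_{k}}}$ gives $\limsup_{k}J(u^{n_{k}})\leq J(\mu)$, and the reverse inequality comes from $J(u^{n_{k}})\geq J^{\mathbb{P}}(u^{n_{k}})$ together with \eqref{3.26} and taking the supremum over $\mathbb{P}$. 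You do gesture at weak compactness and Lemma \ref{gh363} in your closing paragraph, but you never turn them into this contradiction scheme, which is the actual content of the lemma beyond the classical case.
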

\begin{proof}
\begin{description}
\item[$\left( i\right) $] We set 
\begin{equation*}
\zeta _{n}:=\underset{0\leq t\leq T}{\sup }|X^{n}(t)-X^{{\mu }}(t)|^{2},
\end{equation*}
and note that $\zeta _{n}$ $\in \mathbb{L}_{G}^{1}(\Omega _{T})$ for each $%
n\geq 1$.
 If there is indeed a $\eta $ $>0$ such that 
\begin{equation*}
\widehat{E}[\zeta _{n}]\geq \eta, \quad n\in \mathbb{N}^{*},
\end{equation*}
we can find a probability $\mathbb{P}_{n}\in \mathcal{P}$ such that 
\begin{equation*}
E^{\mathbb{P}_{n}}[\zeta _{n}]\geq \eta -\frac{1}{n},\quad n\in \mathbb{N}^{*}.
\end{equation*}%
There exists a sub-sequence $\{\mathbb{P}_{n_{k}}\}_{k=1}^{\infty }$ that
converges weakly to some $\mathbb{P}\in \mathcal{P}$, according to $\mathcal{%
P}$ is weakly compact. Then, we have%
\begin{equation}
\underset{j\rightarrow \infty }{\lim }E^{\mathbb{P}}[\zeta _{n_{j}}]=%
\underset{j\rightarrow \infty }{\lim }\underset{k\rightarrow \infty }{\lim }%
E^{\mathbb{P}_{n_{k}}}[\zeta _{n_{j}}]\geq \underset{k\rightarrow \infty }{%
\lim \inf }E^{\mathbb{P}_{n_{k}}}[\zeta _{n_{k}}]\geq \eta.  \label{3.28.1}
\end{equation}
This is a contradiction to the fact $\underset{j\rightarrow \infty }{\lim }
E^{\mathbb{P}}[\zeta _{n_{j}}]$ $=0$ from (\ref{3.23}).\\
The proof of (\ref{3.23}) is based on G-BDG inequalities  and the standard Gronwall inequality as well as the
Dominated Convergence theorem, according to stable convergence in Lemma $\ref
{stbcvg}$ of $\delta _{\left( u^{n}\left( t\right) ,u^{n}\left( t-\tau
\right) \right) }\left( d\xi \right) dt$ converges to $\mu _{t}\left( d\xi \right) dt$, and the proof method does not extend to proving (\ref{3.24}) because the
Dominated Convergence theorem (and even the celebrated Fatou's lemma) is no
longer valid under sublinear expectation, but the G-BDG inequalities hold
true for G-stochastic integrals and G-SDDEs.
\end{description}
\begin{description}
\item[$\left( ii\right) $] Assume that $\mathcal{L}$ and $\Psi $ satisfy
assumption $(A_{2})$. \\
We have $dt\delta _{\left( u^{n}\left( t\right)
,u^{n}\left( t-\tau \right) \right) }\left( d\xi \right) $ converges weakly
to $dt\mu _{t}\left( d\xi \right) $ quasi-surely. Then, there exists a
sub-sequence $(u^{n_{k}})$ of $(u^{n}),$ we obtain
\begin{equation}
\underset{k\rightarrow \infty }{\lim }J(u^{n_{k}})=J(\mu ).  \label{111}
\end{equation}
 By using Proposition 17 in \cite{denis2011} and (\ref{3.24}) it follows that, there exists a sub-sequence $(X^{n_{k}}(t))_{n_{k}}$
that converges quasi-surely to $X^{{\mu}}(t)$ $i.e.$ $\mathbb{P}$-$a.s.$, for all $\mathbb{P}\in \mathcal{P}$,
uniformly in $t$. We can use (\ref{3.24}) for every $\mathbb{P%
}\in \mathcal{P},$ to obtain that%
\begin{equation*}
\ \underset{k\rightarrow \infty }{\lim }J^{\mathbb{P}}(u^{n_{k}})=J^{\mathbb{%
P}}(\mu ).
\end{equation*}
 From the notation (\ref{3.19-0}), we can note that 
\begin{equation*}
J(u^{n_{k}})=\widehat{E}[\chi ^{u^{n_{k}}}],
\end{equation*}
and
\begin{equation*}
J({\mu})=\widehat{E}[\chi ^{\mu }],
\end{equation*}
 where both $\chi ^{u^{n_{k}}}$, $\chi ^{\mu }$ $\in $ $\mathbb{L}%
_{G}^{1}(\Omega _{T})$.
 If there is some $\eta $ $>0$ then $\ \ $%
\begin{equation*}
\widehat{E}[\chi ^{u^{n_{k}}}]\geq \widehat{E}[\chi ^{\mu }]+\eta, \quad n_{k}\geq
l, l+1,...,
\end{equation*}
 thus, we can find a probability measure $\mathbb{P}_{m}\in \mathcal{P} 
$ such that%
\begin{equation*}
E^{\mathbb{P}_{m}}[\chi ^{u^{n_{k}}}]\geq \widehat{E}[\chi ^{\mu }]+\eta -%
\frac{1}{m}.
\end{equation*}
 Then we can find a sub-sequence $\{\mathbb{P}_{m_{k}}\}_{k=1}^{\infty }$
under a weakly compact $\mathcal{P}$, that converges to $\mathbb{P}\in 
\mathcal{P}$. We have%
\begin{eqnarray*}
E^{\mathbb{P}}[\chi ^{\mu }] &=&\underset{k\rightarrow \infty }{\lim }E^{%
\mathbb{P}_{m_{k}}}[\chi ^{\mu }]=\underset{k\rightarrow \infty }{\lim }%
\underset{j\rightarrow \infty }{\lim }E^{\mathbb{P}_{m_{k}}}[\chi
^{u^{n_{j}}}]\geq \underset{j\rightarrow \infty }{\lim \inf }E^{\mathbb{P}%
_{m_{j}}}[\chi ^{u^{n_{j}}}] \\
&\geq &\underset{j\rightarrow \infty }{\lim \inf }\left( \widehat{E}[\chi
^{\mu }]+\eta -\frac{1}{m_{j}}\right) \\
&=&\widehat{E}[\chi ^{\mu }]+\eta .
\end{eqnarray*}
 Thus, 
\begin{equation*}
E^{\mathbb{P}}[\chi ^{\mu }]\geq \widehat{E}[\chi ^{\mu }]+\eta,
\end{equation*}
 from the definition of the sublinear expectation, we obtain a contradiction.
 Therefore,
\begin{equation*}
\underset{k\rightarrow \infty }{\lim }J(u^{n_{k}})\leq J(\mu ).
\end{equation*}
By using ( \ref{3.26}) to prove that%
\begin{equation*}
\underset{k\rightarrow \infty }{\lim }J(u^{n_{k}})\geq J(\mu ).
\end{equation*}
We have 
\begin{eqnarray*}
\underset{k\rightarrow \infty }{\lim }J(u^{n_{k}}) &\geq &\underset{
k\rightarrow \infty }{\lim }J^{\mathbb{P}}(u^{n_{k}}),\quad\mathbb{P}\in 
\mathcal{P}\\
&=&J^{\mathbb{P}}(\mu ),\quad\mathbb{P}\in \mathcal{P}.
\end{eqnarray*}
\end{description}
Therefore, $\underset{k\rightarrow \infty }{\lim }J(u^{n_{k}})\geq $ $J(\mu )$.
\end{proof}

The following theorem constitutes the main result of our problem, which gives that the two problems have the same Infinium of the expected costs.
\begin{theorem}
\label{relaxed} For every $u\in \mathcal{U}$ and $\mu \in \mathcal{R}$, we
have 
\begin{equation}
\inf_{u\in \mathcal{U}}J(u)=\inf_{\mu \in \mathcal{R}}J(\mu ).  \label{3.43}
\end{equation}
Furthermore, there is a G-relaxed optimal control $\widehat{\mu }\in 
\mathcal{R}$ such that%
\begin{equation}
J(\widehat{\mu })=\min_{\mu \in \mathcal{R}}J(\mu ),  \label{3.44}
\end{equation}
recall that
\begin{equation}
J(\mu )=\sup_{\mathbb{P}\in \mathcal{P}}J^{\mathbb{P}}(\mu ),  \label{3.45}
\end{equation}
where for each $\mathbb{P}\in \mathcal{P}$, the relaxed cost functional is
given as follow%
\begin{equation}
J^{\mathbb{P}}(\mu )=E^{\mathbb{P}}\left[ \int_{0}^{T}\int_{\mathbb{A}}%
\mathcal{L}(t,X^{\mu }\left( t\right) ,X^{\mu }\left( t-\tau \right) ,\xi
_{1},\xi _{2})\mu _{t}(d\xi )dt +\Psi (X^{\mu }\left( T\right) )\right]. \label{3.46}
\end{equation}
\end{theorem}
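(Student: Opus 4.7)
The plan is to prove the two assertions in turn, starting with the equality of infima \eqref{3.43} and then producing the minimizer via a compactness argument.

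For \eqref{3.43}, the inclusion $\inf_{\mu\in\mathcal{R}} J(\mu)\le \inf_{u\in\mathcal{U}} J(u)$ is immediate from the embedding $\Phi$ in \eqref{3.4}: every strict control $u$ yields the G-relaxed control $\Phi(u)$, and substituting $\Phi(u)$ into the relaxed dynamics \eqref{3.5} recovers the strict dynamics \eqref{3.17}, so $J(\Phi(u))=J(u)$. For the reverse inequality, I would fix an arbitrary $\mu\in\mathcal{R}$ and invoke the G-Chattering Lemma \ref{gchtrm} to produce $(u^{n})\subset\mathcal{U}$ with $\delta_{(u^{n}(t),u^{n}(t-\tau))}(d\xi)dt\to\mu_{t}(d\xi)dt$ stably. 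Lemma \ref{lsr37}(ii) then supplies a subsequence $(u^{n_k})$ with $J(u^{n_k})\to J(\mu)$, so $\inf_{u\in\mathcal{U}} J(u)\le J(\mu)$; taking the infimum over $\mu\in\mathcal{R}$ closes the loop.

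For the existence of a relaxed minimizer \eqref{3.44}, I would take a minimizing sequence $(\mu^{n})\subset\mathcal{R}$ with $J(\mu^{n})\downarrow \inf_{\mathcal{R}} J(\mu)$ and extract a stably convergent subsequence $\mu^{n_k}\to\widehat{\mu}$. The compactness input is that $\mathbb{A}$ is a compact Polish space, hence $\mathcal{P}(\mathbb{A})$ is compact and the laws of the random measures $\{\mu^{n}_t(d\xi)\,dt\}$ on $[0,T]\times\mathbb{A}$ form a tight family; the progressive measurability of $\widehat{\mu}$ is preserved by the standard argument. Next, stability of the G-SDDE solution map (the same G-BDG plus Gronwall scheme used in Lemma \ref{lsr37}(i), carried out for general relaxed controls rather than Dirac ones) gives $\widehat{E}[\sup_{t\le T}|X^{\mu^{n_k}}(t)-X^{\widehat{\mu}}(t)|^{2}]\to 0$.

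The remaining, and main, obstacle is to establish the lower semicontinuity
\[
J(\widehat{\mu})\le \liminf_{k\to\infty} J(\mu^{n_k})=\inf_{\mu\in\mathcal{R}} J(\mu),
\]
which is delicate because $J(\mu)=\sup_{\mathbb{P}\in\mathcal{P}} J^{\mathbb{P}}(\mu)$ and one cannot apply dominated convergence directly under $\widehat{E}$. My plan is to reproduce the three-limit contradiction scheme already exploited inside Lemma \ref{lsr37}: assume $J(\widehat{\mu})>\inf_{\mathcal{R}} J(\mu)+\eta$ for some $\eta>0$, choose $\mathbb{P}_{m}\in\mathcal{P}$ with $E^{\mathbb{P}_{m}}[\chi^{\widehat{\mu}}]\ge \widehat{E}[\chi^{\widehat{\mu}}]-\tfrac{1}{m}$, use weak compactness of $\mathcal{P}$ to extract $\mathbb{P}_{m_{j}}\rightharpoonup \mathbb{P}\in\mathcal{P}$, and combine Lemma \ref{gh363} with the $\mathbb{P}$-a.s.\ convergence $X^{\mu^{n_k}}\to X^{\widehat{\mu}}$ (obtained via Proposition 17 of \cite{denis2011} as in the proof of \eqref{3.26}) and the stable convergence of $\mu^{n_k}$ tested against the continuous bounded integrand $\mathcal{L}$. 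Assumption $(A_{2})$, providing boundedness and Lipschitz continuity of $\mathcal{L}$ and $\Psi$, controls the resulting error terms, producing $E^{\mathbb{P}}[\chi^{\widehat{\mu}}]\ge \inf_{\mathcal{R}} J(\mu)+\eta$, which contradicts $E^{\mathbb{P}}[\chi^{\widehat{\mu}}]\le \widehat{E}[\chi^{\widehat{\mu}}]=J(\widehat{\mu})$ combined with the choice of $(\mu^{n})$. The careful bookkeeping of these three intertwined limits is the hard part; everything else reduces to the estimates already proven.
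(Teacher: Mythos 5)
Your proof of \eqref{3.43} coincides with the paper's: the embedding $\Phi$ of \eqref{3.4} gives $\inf_{\mu\in\mathcal{R}}J(\mu)\le\inf_{u\in\mathcal{U}}J(u)$, and the G-Chattering Lemma \ref{gchtrm} combined with Lemma \ref{lsr37}(ii) gives the reverse inequality. For \eqref{3.44}, however, you take a genuinely different route, and it is there that a real gap appears. The paper does \emph{not} run the direct method on $\mathcal{R}$: it takes as input the per-measure existence from Lemma \ref{lsr37}(ii) (an optimal $\widehat{\mu}_{\mathbb{P}}$ for each fixed $\mathbb{P}\in\mathcal{P}$, obtained by reducing the G-SDDE to a classical SDDE via aggregation and citing the classical theory), and then derives a contradiction from a hypothetical uniform gap $J(\upsilon)\ge\inf_{\mathcal{R}}J+\varsigma$ by exploiting the weak compactness of the family of priors $\mathcal{P}$ together with Lemma \ref{gh363}. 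The compactness being used is that of $\mathcal{P}$, not that of $\mathcal{R}$.

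Your route requires extracting from a minimizing sequence $(\mu^{n})\subset\mathcal{R}$ a stably convergent subsequence with an admissible limit $\widehat{\mu}$, and this is the step you cannot get for free in the G-framework. Under a single probability measure, tightness plus Skorokhod representation yields a limiting random measure, but only up to that one measure; here the elements of $\mathcal{P}$ are mutually singular, and you need one $\mathbb{F}^{\mathcal{P}}$-progressively measurable $\widehat{\mu}$ that serves as the limit quasi-surely. Nothing in the paper supplies this aggregation of the limit control, and it is precisely the difficulty the paper's argument is structured to avoid. Likewise, your claimed stability $\widehat{E}\bigl[\sup_{t\le T}|X^{\mu^{n_k}}(t)-X^{\widehat{\mu}}(t)|^{2}\bigr]\to 0$ is not covered by Lemma \ref{lsr37}(i): that lemma treats the chattering approximations $\delta_{(u^{n}(t),u^{n}(t-\tau))}$ of a \emph{fixed} $\mu$, where the approximating controls are built pathwise from $\mu$; for an arbitrary minimizing sequence there is no such pathwise link, and mere stable convergence does not let the G-BDG plus Gronwall scheme close. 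Finally, the contradiction you describe does not bite as written: $E^{\mathbb{P}}[\chi^{\widehat{\mu}}]\ge\inf_{\mathcal{R}}J+\eta$ is perfectly consistent with $E^{\mathbb{P}}[\chi^{\widehat{\mu}}]\le J(\widehat{\mu})$ under your own hypothesis $J(\widehat{\mu})>\inf_{\mathcal{R}}J+\eta$; the contradiction would have to come from additionally passing to the limit in $E^{\mathbb{P}}[\chi^{\mu^{n_k}}]\le J(\mu^{n_k})\to\inf_{\mathcal{R}}J$ under the fixed limit measure $\mathbb{P}$, which again presupposes the two unproven convergences above. If you want a workable argument, the paper's scheme --- per-$\mathbb{P}$ minimizers plus weak compactness of $\mathcal{P}$ --- is the path of least resistance.
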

To prove (\ref{3.43}), using Lemma \thinspace \ref{gchtrm} of G-Chattering lemma  and Lemma \thinspace \ref{lsr37} of stability results for the G-SDDE (\ref{3.18}). According to Lemma \thinspace \ref{gchtrm},
given a G-relaxed control $\mu \in \mathcal{R}$, and there is a sequence $
(u^{n})_{n}\in $ $\mathcal{U}$ of strict controls such that $\delta _{\left(
u^{n}(t), u^{n}\left( t-\tau \right) \right) }(d\xi )dt$ converges weakly to $%
\mu _{t}(d\xi )dt$ quasi-surely i.e. $\mathbb{P}$-$a.s.$, for all $\mathbb{P}
$ $\in $ $\mathcal{P}$. The existence of a G-relaxed optimal control for
each $\mathbb{P}$ $\in $ $\mathcal{P}$ and a tightness argument is used to
prove (\ref{3.44}).\\
\begin{proof}
From  (\ref{3.4}) and (\ref{111}) we can easily obtain that
\begin{equation*}
\inf_{u\in \mathcal{U}}J(u)\leq \inf_{\mu \in \mathcal{R}}J(\mu ).
\end{equation*}
Therefore, for every $u\in \mathcal{U}$, $\delta _{u}\in \mathcal{R}$, we
have
\begin{equation*}
J(u)=J(\delta _{u})\geq \inf_{\mu \in \mathcal{R}}J(\mu ).
\end{equation*}
Hence,
\begin{equation*}
\inf_{u\in \mathcal{U}}J(u)\geq \inf_{\mu \in \mathcal{R}}J(\mu ),
\end{equation*}
which proves (\ref{3.43}).
Since $\mathcal{L}$ and $\Psi $ are continuous and bounded, for each $%
\upsilon \in \mathcal{R},$ we turn now to the proof of existence of G-relaxed
optimal control.
\begin{equation*}
\chi ^{\mu }:=\int_{0}^{T}\int_{\mathbb{A}}\int_{\mathbb{A}}\mathcal{L}%
(t,X^{\upsilon }\left( t\right) ,X^{\upsilon }\left( t-\tau \right) ,\xi
_{1},\xi _{2})\mu _{t}(d\xi )dt+\Psi (X^{\upsilon }\left( T\right) )\in 
\mathbb{L}_{G}^{1}\left( \Omega _{T}\right).
\end{equation*}
By using Lemma \thinspace \ref{gh363}, we can deduce that for each $
\upsilon \in \mathcal{R}$,
\begin{equation}
\underset{n\rightarrow \infty }{\lim }J^{\mathbb{P}_{n}}(\upsilon )=J^{%
\mathbb{S}}(\upsilon ),  \label{3.47}
\end{equation}
Then, the sequence $\{\mathbb{P}_{n}\}_{n=1}^{\infty }\in \mathcal{P}$
converges weakly to $\mathbb{S}$ $\in \mathcal{P}$ .
Assume there is an $\varsigma >0$ such that, for each $\upsilon \in \mathcal{%
R},$
\begin{equation*}
J(\upsilon )\geq \inf_{\mu \in \mathcal{R}}J(\mu )+\varsigma.
\end{equation*}
Next, by according to Lemma \thinspace \ref{lsr37}, for each $\mathbb{P}\in 
\mathcal{P}$ there exists a G-relaxed optimal control $\widehat{\mu }\in \mathcal{
R}$ such that
\begin{equation*}
\widehat{\mu }_{\mathbb{P}}=\arg \left( \underset{\mu \in \mathcal{R}}{\min }
J^{\mathbb{P}}(\mu )\right),
\end{equation*}
we get that
\begin{eqnarray*}
J(\upsilon ) &\geq &\underset{\mathbb{P}\in \mathcal{P}}{\sup }\left(
\inf_{\mu \in \mathcal{R}}J^{\mathbb{P}}(\mu )\right) +\varsigma \\
&=&\underset{\mathbb{P}\in \mathcal{P}}{\sup }J^{\mathbb{P}}(\widehat{\mu }_{%
\mathbb{P}})+\varsigma .
\end{eqnarray*}
However, there exists $\mathbb{P}_{n}\in \mathcal{P}$ for every $n\geq 1$, such that
\begin{equation*}
J^{\mathbb{P}_{n}}(\upsilon )\geq J(\upsilon )+\frac{1}{n}.
\end{equation*}
We can extract a sub-sequence $\{\mathbb{P}_{n_{j}}\}_{j=1}^{\infty }\in 
\mathcal{P}$, from the sequence $\{\mathbb{P}_{n}\}_{n=1}^{\infty }\in 
\mathcal{P}$ being weakly compact, which converges weakly to some $\mathbb{S}
$ $\in \mathcal{P}$.
Thus, As a result of \thinspace (\ref{3.47}) for each $\upsilon \in 
\mathcal{R}$, it follows that
\begin{equation*}
J^{\mathbb{S}}(\upsilon )=\underset{j\rightarrow \infty }{\lim }J^{\mathbb{P}%
_{n_{j}}}(\upsilon )\geq \underset{\mathbb{P}\in \mathcal{P}}{\sup }J^{%
\mathbb{P}}(\widehat{\mu }_{\mathbb{P}})+\varsigma .
\end{equation*}
Particularly, we obtain for a given $\upsilon ^{\mathbb{S}}$ $\in $ 
$\mathcal{R}$,
\begin{equation*}
J^{\mathbb{S}}(\upsilon ^{\mathbb{S}})\geq \underset{\mathbb{P}\in \mathcal{P%
}}{\sup }J^{\mathbb{S}}(\upsilon ^{\mathbb{S}})+\varsigma,
\end{equation*}
which is a contradiction with the fact that $\varsigma >0$.
\end{proof}
\section{Economics model}
\label{sec:5}
As an application of our theoretical result, an economics model describing the rate of change of capital $K$ and labor $L$ in a market by a system of ODEs introduced by F. R. Ramsey in $1928$ (for more details, can see \cite{Ramsey1928}), given by
\begin{equation}
\left\{ 
\begin{array}{c}
\frac{dK_{s}}{ds}=P_{s}-C_{s},\\ 
\frac{dL_{s}}{ds}=a_{s}L_{s},
\end{array}
\right.  \label{FNM}
\end{equation}
where 
\begin{center}
\begin{description}
\item[$P$]: production,
\item[$C$]: consumption rates.
\item[$a_{s}$]: the rate of growth of labor (population),
\end{description}
\end{center}
In \cite{Gandolfo1997} the production, capital and labor
related by the Cobb-Douglas formula as follow
\begin{equation}
P_{s} =DK_{s}^{\alpha }L_{s}^{\beta }, \label{CobDg}
\end{equation}
where $D,\alpha,\beta > 0$ are a constants. If $\alpha=\beta =1$, we get the linearity of (\ref{CobDg}) under certain scenarios, which we will assume throughout our problem. The labor is constantly given by $L_{s}=L_{0}$, which verifies specific markets, or we take in many years relatively short time intervals. As a result, the production rate and capital have a dependent formula $P_{s}=MK_{s}$, where $M=DL_{0}$.

Large random disturbances (big noise), which come from political decisions, wars, and the atmosphere situations, are the real reasons for influencing the change in the economy. Then, we can define the production rate by
\begin{equation}
 P_{s}=MK_{s}+"\textit{big noise}".   \label{prod}
\end{equation}
 By using (\ref{prod}) in (\ref{FNM}), we obtain
\begin{equation*}
\frac{dK_{s}}{ds}=MK_{s}-C_{s}+"\textit{big noise}".
\end{equation*}
Then, we obtain the following system
\begin{equation}
dK_{s}=[MK_{s}-C_{s}]ds+\gamma \sigma (K_{s})dW_{s},
\end{equation}
where $W_{s}$ is a standard Brownian motion,
with $\gamma$ is an unknown parameter and the coefficient $\sigma$ depends on $K$ because is strongly affected by factors that change the economy due to a big noise. 
Usually, the essential period is needed for transition due to the influence and change of the economy, such as the length of the production cycle in many economic situations during the war such as the increase in oil and the impact on wheat production.
Therefore, the most accurate assumption given 
the change in the rate of capital $K$ depends on the investment made at time $s-\tau $, where $s$ is the current time and is the duration of the cycle required to create working capital during wartime delays and political decisions delays, and as time passes, the noise increases.
\begin{equation}
dK_{s}=[MK_{s-\tau}-C_{s}]ds+\gamma \sigma (K_{s-\tau})dW_{s}, \label{cap-del}
\end{equation}
which means that we can not estimate the value of $\gamma$, the only information that we can know is that gamma in $[\sigma _{min},\sigma
_{max}]$, for some $\sigma _{min},\sigma _{max}\in $ $\mathbb{R}_{+}^{\ast },$
the idea is to consider the worst-case scenario; i.e., 
\begin{equation*}
\sup_{\gamma\in [\sigma _{min},\sigma
_{max}]}E^{\mathbb{P}_{\gamma}}[\phi(K) ]=\widehat{E}[\phi(\widehat{K})],
\end{equation*}
where $E^{\mathbb{P}_{\gamma}}$ is an expected valued over $\mathbb{P}_{\gamma}$, with $\mathbb{P}_{\gamma}$ is the probability indexed induced by (\ref{cap-del}), and $\widehat{E}$ is the non-linear expectation, with $\phi$ is a given function, and $\widehat{K}$ is the solution of the following equation
\begin{equation}
d\widehat{K}_{s}=[M\widehat{K}_{s-\tau}-C_{s}]ds+\sigma (\widehat{K}_{s-\tau})dB_{s}, \label{Ncap-del}
\end{equation}
where $B_{s}$ is the G-Brownian motion follow  $\mathcal{N}(0,$ $s[\sigma^{2}_{min},\sigma^{2}_{max}])$. It is necessary to study how we make decisions when we are often faced with probabilistic uncertainty. The main question is to control the consumption rate $C$ by a control $u_{.}$, i.e., it is of the form $u_{s}C_{s}$, then
\begin{equation}
d\widehat{K}_{s}=[M\widehat{K}_{s-\tau }-u_{s}C_{s}]ds+\sigma \left( \widehat{K}_{s-\tau
}\right) dB_{s},  \label{2}
\end{equation}
with
\begin{equation}
\widehat{K}_{\theta}= k\left( \theta\right), \quad t-\tau \leq \theta\leq t.
\label{3}
\end{equation}
For this, we suggest studying the modified Ramsey model uncertainty
with delay introduced by (\ref{2}) and (\ref{3}). We want to minimize the investment capital under the above assumptions, by the following cost function
\begin{equation*}
J\left( u;t,k\right) =\widehat{E}\left[\left[ \widehat{K}_{T}-\alpha \right]^{2} +\frac{1}{2}\int_{t}^{T}
u_{r}^{2}dr \right],
\end{equation*}
The idea is to get our capital $\widehat{K}$ in the future at time $T$ closer to $\alpha$ with minimal energy, with $\alpha$ as a random variable.
\subsection{Optimization problem}

To solve our optimal control, we connect its HJB with a G-FBSDDE that we solve numerically. In fact, the condition of quasi-continuity is not required for random variables and processes in \cite{peng2010}.

As a result, see \cite{peng2010} may be applied to practical situations that are not in quasi-continuous spaces. We assume some independence among components of G-Brownian motion in the framework of super-linear expectation and derive an obvious type of HJB equation with a vector-valued control variable. It can be observed that the result in the next provides an application of stochastic control and the uncertainty volatility model.

Let define $(K_{t})_{t\in[0,T]}$, as a $C$-valued stochastic processes from the space $S^{p}([0,T ]; C)  (1 \leq p < \infty )$, and the norm defined by
\begin{equation}
  \left\Vert {K}\right\Vert^{p}_{S^{p}([0,T ]; C)}=\widehat{E}[ \underset{t\in [0, T]} {\sup} \left \vert K_{t} \right \vert^{p}_{C}]=\widehat{E}[ \underset{t\in [0, T]} {\sup} \underset{\theta\in  [-\tau, 0]} {\sup} \left \vert K_{t}(\theta) \right \vert^{p}]< \infty,
\end{equation}
with C-continuous paths.

The gradient $\nabla_{k}V(t,k)$ at $(t,k)\in[0,T]\times C\left(\left[-\tau ,0\right] ;\mathbb{R}^{n}\right)$ is an $n$-tuple of finite Borel measures on $[-\tau,0]$. Let define
\begin{equation}
\nabla_{0} V(t, k)=\nabla_{k}V(t, k)\{(0)\},
\end{equation}

with $\left \vert \nabla_{k}V(t,k)\right \vert$ is a  total
variation norm and define
where $\nabla_{0} V(t, k)$
is a vector in $\mathbb{R}^{n}$  which represent the
masse at point $0$ of the component of $\nabla_{k} V(t, k)$,  such that we have the following continuous map on $[0, T] \times C$ for every $g\in C$\\
\begin{equation}
(t,k)\mathcal{\longrightarrow }\left\langle {\nabla_{k} V(t, k), g}\right\rangle =\int_{[-\tau, 0]}g(\theta)\cdot \nabla_{k}V(t, k)(d\theta),
\end{equation}
for more details see \cite{fuhrman2010}.
Then the value function define by
\begin{equation}
V\left(t,k\right) =\underset{u}{\inf }\widehat{E}\left[ \left[
K_{T}-\alpha \right]^{2} +\frac{1}{2}\int_{t}^{T} u_{r}^{2}dr\mid
_{K_{t}^{u}=k}\right].  \label{VF}
\end{equation}
Then, the HJB equation given by

\begin{equation}
\left\{ 
\begin{array}{l}
\frac{\partial V\left(t,k\right)}{\partial{t}}
+\underset{u}{\inf }\left\{ LV\left(t,k\right)+\frac{1}{2}u^{2}\right\} =0, \\ 
V\left(T,k\right)=\left[ k-\alpha \right]^{2},
\end{array}
\right.  \label{G-HJB1}
\end{equation}

where $u$ is a control variable is selected dynamically within $\mathbb{R}^{n}$. 

The infinitesimal generator $L$ given by
\begin{equation}
L =\frac{1}{2} \sigma^{2}\left(
k\right) \nabla_{0}^{2}+Mk\nabla_{0}-uC_{t}\nabla _{0},  \label{IGEN}
\end{equation}
then, we get
\begin{eqnarray}
&\frac{\partial V\left(t,k\right)}{\partial{t}}+\frac{1}{2} \sigma ^{2}\left( k\right)\nabla _{0}^{2}V\left( t,k\right)+Mk\nabla _{0}V\left( t,k\right) \nonumber\\
&+\underset{u}{\inf } \left\{ -uC_{t}\nabla_{0}V\left(t,k\right)+\frac{1}{2}u^{2}\right\}=0.
\label{G-HJB2}
\end{eqnarray}
Hence, the infimum is achieved when
\begin{equation*}
u^{\ast }-C_{t}\nabla _{0}V\left( t,k\right) =0.
\end{equation*}
Therefore
\begin{equation*}
u^{\ast }=C_{t}\nabla _{0}V\left(t, k\right).
\end{equation*}
As a converse of Eq. (\ref{G-HJB2}), we have
\begin{equation}
\left\{ 
\begin{array}{l}
\frac{\partial V\left(t,k\right)}{\partial{t}}+\frac{1}{2} \sigma ^{2}\left( k\right)
\nabla _{0}^{2}V\left(t, k\right)+Mk\nabla _{0}V\left(t, k\right)-\frac{1}{2}C_{t}^{2}\left(
\nabla _{0}V\left(t, k\right)\right) ^{2}=0, \\ 
V\left(T,k\right)=\left[ k-\alpha \right]^{2},
\end{array}%
\right. \label{PDE}
\end{equation}
where $V\in C_{b}^{1,2}$ define the processes $Y^{t,k}_{s}=V\left( s, k\right) $ and 
\begin{eqnarray}
Z^{t,k}_{s}&=&\sigma \left(k\right) \nabla _{0}V\left( s,k\right),   \label{NOT} 
\end{eqnarray}
where $\sigma $ is non-degenerate diffusion$.$
\begin{eqnarray}
Y^{t,k}_{s} &=&\left[ \widehat{K}^{t,k}_{T}-\alpha \right]^{2} -\frac{1}{2}\int_{s}^{T}\left[ 
C_{r}Z^{t,k}_{r}\sigma ^{-1}\left( \widehat{K}^{t,k}_{r-\tau }\right) \right]^{2}  dr \nonumber\\
&+&\int_{s}^{T}Z^{t,k}_{r}dB_{r}, \quad\label{G-BSDE}
\end{eqnarray}
and the G-FBSDDEs defined on $[t,T]\subseteq [0,T]$: for $s\in  [t,T]$,
\begin{eqnarray}
\left\{ 
\begin{aligned}
\widehat{K}^{t,k}_{s}&=k(t) +\int_{t}^{s}\left( M\widehat{K}^{t,k}_{r-\tau }\right)
dr+\int_{t}^{s} \sigma \left( \widehat{K}^{t,k}_{r-\tau }\right) dB_{r}, \\ 
Y^{t,k}_{s}&=\left[ \widehat{K}^{t,k}_{T}-\alpha \right]^{2} -\frac{1}{2}\int_{s}^{T}\left[ C_{r}Z^{t,k}_{r}\sigma^{-1}\left( \widehat{K}^{t,k}_{r-\tau }\right)\right]^{2}dr\\
&+\int_{s}^{T}Z^{t,k}_{r}dB_{r}.
\end{aligned}
\right.  \label{G-FBSDEs}
\end{eqnarray}

\subsection{Numerical study}
In this section, we present numerical results for G-FBSDE with delay (\ref{G-FBSDEs}) based on a variant of the least-squares Monte Carlo algorithm proposed in \cite{bender2012}. It comprises two phases, as do most numerical methods for G-FBSDE:

\begin{itemize}
\item The explicit time discretization of G-FBSDE with delay.
\item The approach for approximation of conditional expectations.
\end{itemize}
First, for the time discretization, we consider a partition of the
interval $[-\tau ,T]$ as $\pi =\left\{ t_{0},t_{1},...,t_{N}\right\} $, $%
i.e. $, $-\tau =t_{0}<t_{1}<t_{2}<$\textperiodcentered \textperiodcentered
\textperiodcentered $<t_{N_{\tau}}=0<t_{N_{\tau}+1}<...<t_{N}=T$.
For the pair $(\widehat{K}_{s},Y_{s}).$ We motivate a natural time discretization that
goes backward in time. Defining $\Delta t_{n}=t_{n+1}-t_{n},$ $\Delta
B_{t_{n}}=B_{t_{n+1}}-B_{t_{n}}$ for $
t_{n}\in \left\{ t_{N_{\tau}},t_{N_{\tau}+1},...,t_{N}\right\}$, we have
\begin{equation}
\left\{ 
\begin{array}{l}
\widehat{K}_{t_{n}}=\widehat{K}_{t_{n+1}}+M\widehat{K}_{t_{n}-\tau}\Delta t_{n}
+\sigma \left( \widehat{K}_{t_{n}-\tau}\right) \Delta B_{t_{n}}, \\ 
Y_{t_{n}}=Y_{t_{n+1}}+\frac{1}{2}\left[ C_{t_{n}}Z_{t_{n+1}}\sigma ^{-1}\left(
\widehat{K}_{t_{n}-\tau }\right) \right]^{2} \Delta t_{n}-Z_{t_{n+1}}\Delta B_{t_{n}},
\end{array}
\right.\label{Ab}
\end{equation}
where $Y_{t_{N}}=\left[ \widehat{K}_{t_{N}}-\alpha \right]^{2}.$ 

By taking a conditional expectation on both sides of G-BSDE in (\ref{Ab}) to get an approximation of $Y_{t_{n}}$, given $Y_{t_{n+1}}$ from the joint process $(\widehat{K}_{t_{n}},Y_{t_{n}})$ which is adapted to the filtration generated by $(B_{r})_{0\leq r\leq s}$, and by using the property of symmetric martingale, we obtain that
\begin{eqnarray}
Y_{t_{n}}=\widehat{E}\left[ Y_{t_{n}}|\mathcal{F}_{t_{n}}\right] &=&\widehat{E}%
\left[( Y_{t_{n+1}}+\frac{1}{2}\left[ C_{t_{n}}Z_{t_{n}}\sigma ^{-1}\left(
\widehat{K}_{t_{n}-\tau }\right) \right]^{2} \Delta t -Z_{t_{n}}\Delta B_{t_{n}})|\mathcal{F}_{t_{n}}\right] \nonumber\\
&=&\widehat{E}\left[( Y_{t_{n+1}}+\frac{1}{2}\left[ C_{t_{n}}Z_{t_{n}}\sigma ^{-1}\left(
\widehat{K}_{t_{n}-\tau }\right) \right]^{2} \Delta t |\mathcal{F}_{t_{n}}\right] \nonumber\\
&+&\widehat{E}\left[ -Z_{t_{n}}\Delta B_{t_{n}}|\mathcal{F}_{t_{n}}\right] \nonumber\\
 &=&\widehat{E}
\left[( Y_{t_{n+1}}+\frac{1}{2}\left[ C_{t_{n}}Z_{t_{n}}\sigma ^{-1}\left(
\widehat{K}_{t_{n}-\tau }\right) \right]^{2} \Delta t)|\mathcal{F}_{t_{n}}\right],
\label{Ac1}
\end{eqnarray}
from (\ref{Ac1}) we have the time discretization $(Y_{t_{n}}^{\pi },Z_{t_{n}}^{\pi })$ for $(Y,Z)$ by
\begin{equation*}
Y_{t_{n}}^{\pi }=\widehat{E}\left[ Y_{t_{n}}^{\pi }|\mathcal{F}_{t_{n}}
\right] =\widehat{E}\left[( Y_{t_{n+1}}^{\pi }+\frac{1}{2}\left[ C_{t_{n}}^{\pi }Z_{t_{n}}^{\pi }\sigma ^{-1}\left(
\widehat{K}_{t_{n}-\tau }^{\pi}\right) \right]^{2}
\Delta t)|\mathcal{F}_{t_{n}}\right].
\end{equation*}

Let use the value of $Z_{s}= \sigma \left(
k\right) \nabla _{0}V(s,k)$ $\left( \text{resp. }
Z_{t_{n+1}}= \sigma \left( k_{t_{n}}\right)
\nabla _{0}V(t_{n},k_{t_{n}})\right) $ to get $Y_{t_{n}}$. 
By least square method \cite{kebiri2019},
\begin{equation}
Y_{t_{n}}=\underset{Y\in L^{2}}{\arg \min }\widehat{E}\left[ \left\vert
Y\left( \widehat{K}_{t_{n}}\right) -Y_{t_{n+1}}^{\pi }-\frac{1}{2}\left[ C_{t_{n}}^{\pi }Z_{t_{n+1}}^{\pi }\sigma
^{-1}\left( \widehat{K}_{t_{n}-\tau }^{\pi }\right) \right] ^{2}\Delta
t\right\vert ^{2}\right],  \label{abcd}
\end{equation}
$Y$ iterates through all measurable functions with $\widehat{E}\left[
\left\vert Y\left( \widehat{K}_{t_{n}}\right) \right\vert ^{2}\right] <$ $\infty$.
Finally, we get that 
\begin{equation}
Y_{t_{n}}=\underset{Y=Y\left( \widehat{K}_{t_{n}}\right) }{\arg \min }%
\widehat{E}\left[ \left\vert Y\left( \widehat{K}_{t_{n}}\right)
-Y_{t_{n+1}}^{\pi }-\frac{1}{2}\left[ C_{t_{n}}^{\pi }Z_{t_{n+1}}^{\pi }\sigma
^{-1}\left( \widehat{K}_{t_{n}-\tau }^{\pi }\right) \right] ^{2}\Delta
t\right\vert ^{2}\right] .  \label{abcde}
\end{equation}

By using $Y\left( \widehat{K}_{t_{n}}\right) \approx \underset{i=1}{\overset{L_{1}}{\sum }}%
\varpi _{i}\left( t_{n}\right) \chi _{i}\left( \widehat{K}_{t_{n}}\right),$ for some $L_{1}\in \mathbb{N}$, where $\varpi _{1}\left( .\right)\\
,...,\varpi _{L_{1}}\left( .\right) \in \mathbb{R}
$ is the coefficients of the basis $\chi _{1},...,\chi _{L_{1}}:\mathbb{R}^{n}\rightarrow 
\mathbb{R}$. By least-square method

\begin{equation}
\varpi ^{\left( L_{1},L_{2}\right) }\left( t_{n}\right) =\underset{\alpha
\in \mathbb{R}^{L_{1}}}{\arg \min }\left\Vert W_{t_{n}}^{\left(
L_{1},L_{2}\right) }\varpi -\mathcal{A}_{t_{n}}\right\Vert ^{2},  \label{LSM}
\end{equation}
where $W_{t_{n}}^{\left( L_{1},L_{2}\right) }=\left( \chi _{i}\left(
\widehat{K}_{t_{n}}^{\pi }\right) \right) _{i=1,...,L_{2},i=1,...,L1}.$
then, we get that
\begin{equation*}
\varpi ^{\left( L_{1},L_{2}\right) }\left( t_{n}\right) =\left( \left(
W_{t_{n}}^{\left( L_{1},L_{2}\right) }\right) ^{T }\left(
W_{t_{n}}^{\left( L_{1},L_{2}\right) }\right) \right) ^{-1}\left(
W_{t_{n}}^{\left( L_{1},L_{2}\right) }\right)^{T } \mathcal{A}_{t_{n}},
\end{equation*}
is the solution to the least-squares problem (\ref{LSM}), and for further information (see \text{\cite{kebiri2018}}).\\
In order to solve the forward G-SDDE, we have first to simulate the increment of the G-Brownian motion, for this later we use the same approach provided on \cite{yang2016} to simulate the density (resp. distribution) of the G-Normal BM  by employing the finite difference method to simulate its corresponding G-PDE.

The simulated result in Figure \thinspace \ref{G-Density} (resp. Figure \thinspace \ref{G-Normal}) illustrate the G-Normal density (resp. distribution) with $\sigma _{min}=0.75$ and $\sigma _{\max }=1.25$.
\begin{figure}[h!]
\centering
\includegraphics[width=0.55\textwidth]{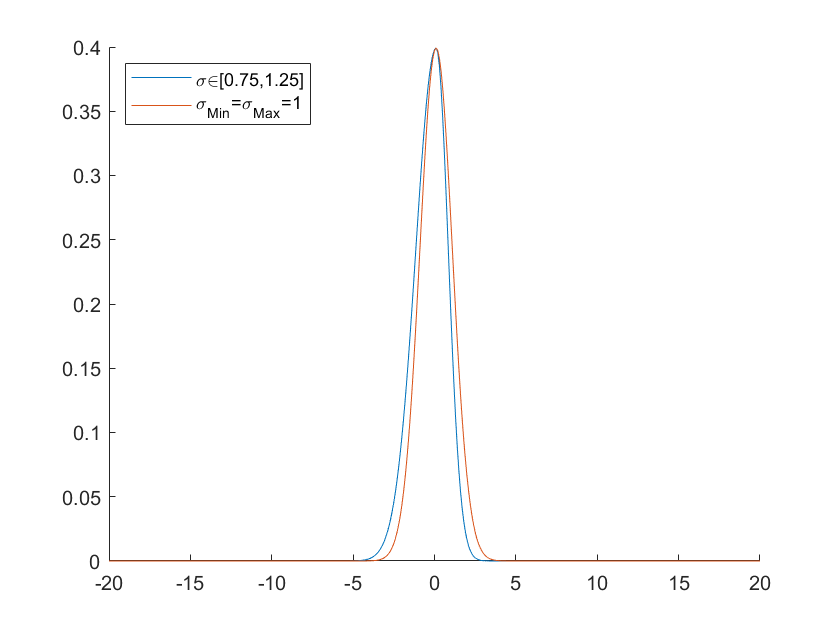}
\caption{The G-Normal density for $\sigma _{min}=0.75, 
\sigma_{max}=1.25$ and the standard normal density.}\label{G-Density}
\end{figure}
\begin{figure}[hb!]
\centering
\includegraphics[width=0.53\textwidth]{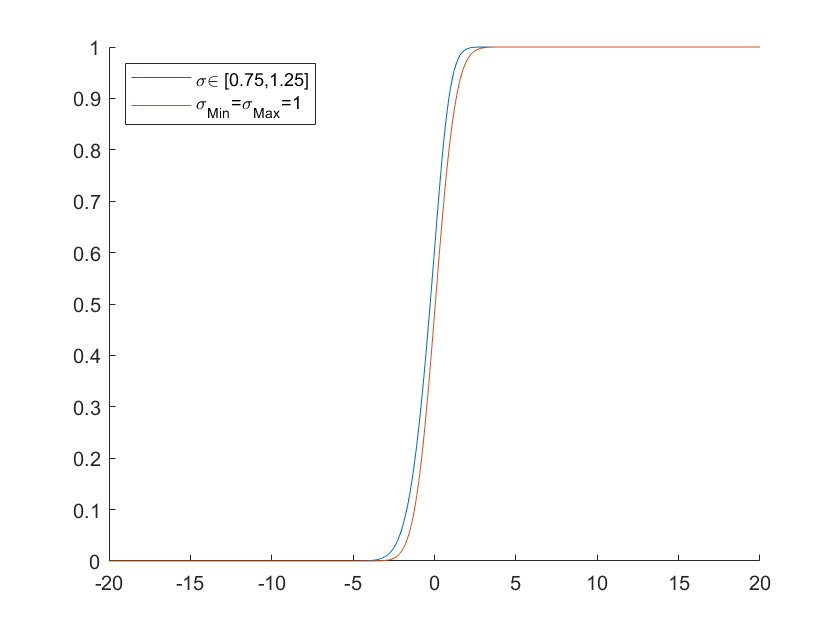}
\caption{The G-Normal-distribution for $\sigma _{min}=0.75,\,
\sigma_{max}=1.25$ and the standard normal density.}\label{G-Normal}
\end{figure}
\clearpage
 Concerning the choice of the parameters, we take the value of $ \sigma_{min}=0.75, \sigma_{max}=1.25, T=1, M=1, \tau =0.1, \alpha =0,$ and $\sigma (\widehat{K})=2*\widehat{K}$. We take the value of $\widehat{K}$ in the interval $[-\tau,0]$ are uniform random values in the interval $[1,2].$ For this choice of coefficients and data, we obtain the following simulation result: 
Figure \thinspace \ref{G-FSDEwD}, shows that the trajectories of the solution of the G-FSDDE corresponding to $\sigma _{max}=1.25, \sigma _{min}=0.75.$
\begin{figure}[ht!]
\centering
\includegraphics[width=0.7\textwidth]{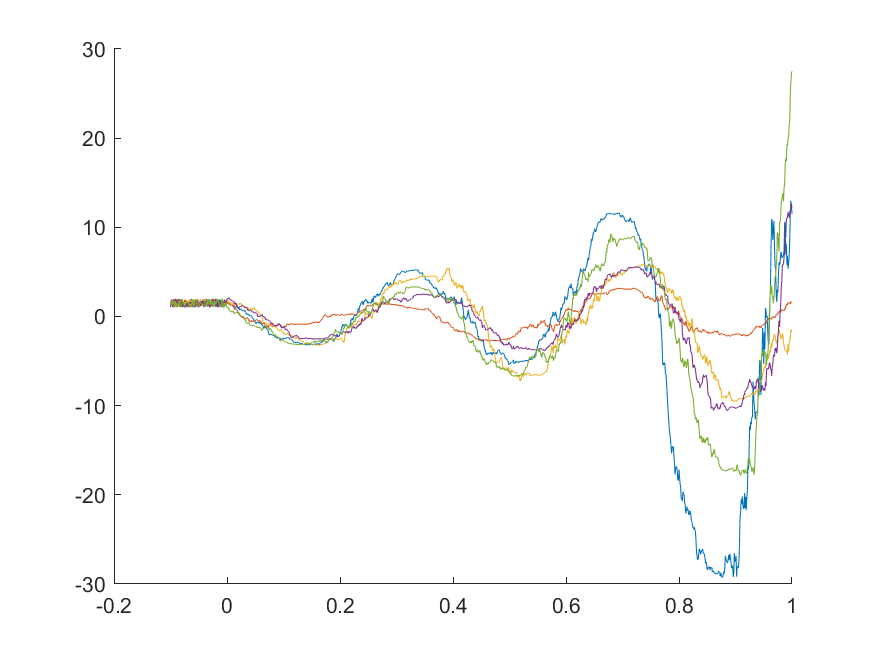}
\caption{The solution of the G-FSDDE.}\label{G-FSDEwD}
\end{figure}

In Figure \thinspace \ref{G-BSDEwD}, we represent the trajectories of the solution of the G-BSDDE.  According to a least-square Monte Carlo scheme and based on the Euler discretization.

\begin{figure}[hb!]
\centering
\includegraphics[width=0.68\textwidth]{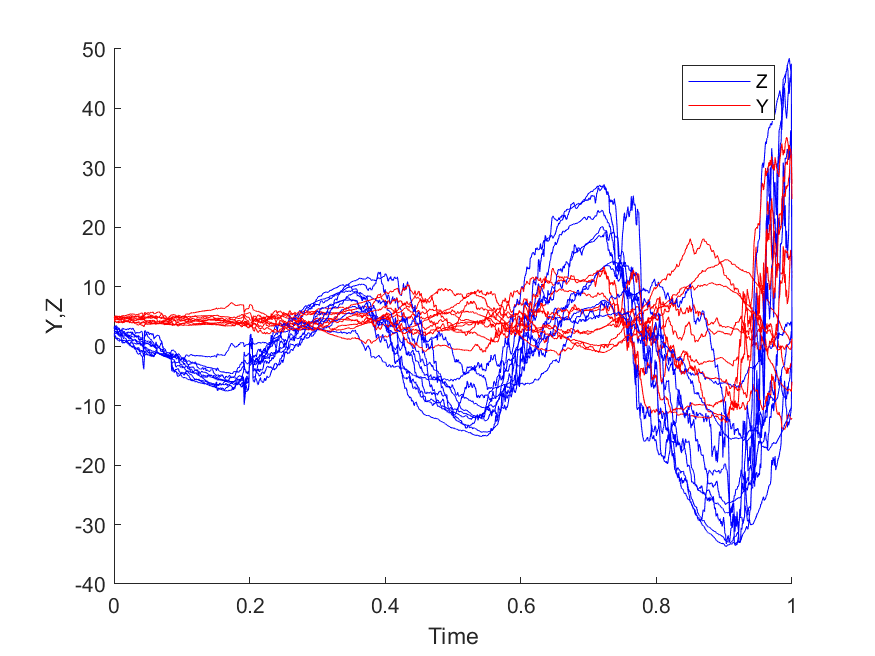}
\caption{The solution of the G-BSDDE.} \label{G-BSDEwD}
\end{figure}

The simulation of G-FBSDDE gives us the value of Y at initial time $Y_{0}=3.9281$, which represents the optimal cost of our optimal control.
\section*{Acknowledgements}
The author's knowledge of the funding of the ERASMUS KA107 project
\\- Omar Kebiri knowledge of the funding of the Deutsche Forschungsgemeinschaft (DFG, German Research Foundation) under \ Germany's Excellence Strategy. The Berlin Mathematics Research Center MATH+ (EXC-2046/1, project ID: 390685689), project EF4-6.

\bibliographystyle{plain}
\bibliography{bibdel}

\begin{thebibliography}{10}

\bibitem{Agram2014}
Nacira Agram and Bernt {\O}ksendal.
\newblock Infinite horizon optimal control of forward-backward stochastic
  differential equations with delay.
\newblock {\em Journal of Computational and Applied Mathematics}, 259:336--349,
  2014.

\bibitem{bahlali2006}
Se{\"\i}d Bahlali, Brahim Mezerdi, and Boualem Djehiche.
\newblock Approximation and optimality necessary conditions in relaxed
  stochastic control problems.
\newblock {\em Journal of Applied Mathematics and Stochastic Analysis}, 2006,
  2006.

\bibitem{balachandran1989}
Krishnan Balachandran.
\newblock Existence of optimal control for non-linear multiple-delay systems.
\newblock {\em International Journal of Control}, 49(3):769--775, 1989.

\bibitem{bender2012}
Christian Bender and Jessica Steiner.
\newblock {\em Least-squares monte carlo for backward sdes}.
\newblock Springer, 2012.

\bibitem{biagini2018}
Francesca Biagini, Thilo Meyer-Brandis, Bernt Øksendal, and Krzysztof Paczka.
\newblock Optimal control with delayed information flow of systems driven by
  \text{G}-\text{B}rownian motion.
\newblock {\em Probability, Uncertainty and Quantitative Risk}, 3(1):1--24,
  2018.

\bibitem{denis2011}
Laurent Denis, Mingshang Hu, and Shige Peng.
\newblock Function spaces and capacity related to a sublinear expectation:
  application to \text{G}-\text{B}rownian motion paths.
\newblock {\em Potential analysis}, 34:139--161, 2011.

\bibitem{denis2006}
Laurent Denis and Claude Martini.
\newblock A theoretical framework for the pricing of contingent claims in the
  presence of model uncertainty.
\newblock {\em The Annals of Applied Probability}, 16(2):827--852, 2006.

\bibitem{Elgroud2022}
Nabil Elgroud, Hacene Boutabia, Amel Redjil, and Omar Kebiri.
\newblock Existence of relaxed optimal control for \text{G}-neutral stochastic
  functional differential equations with uncontrolled diffusion.
\newblock {\em Bulletin of the Institute of Mathematics Academia Sinica},
  17(2):143--172, 2022.

\bibitem{Elsanosi2000}
Ismail Elsanosi, Bernt {\O}ksendal, and Agnes Sulem.
\newblock Some solvable stochastic control problems with delay.
\newblock {\em Stochastics: An International Journal of Probability and
  Stochastic Processes}, 71(1-2):69--89, 2000.

\bibitem{Fei2019}
Chen Fei, Wei-yin Fei, and Li-tan Yan.
\newblock Existence and stability of solutions to highly nonlinear stochastic
  differential delay equations driven by \text{G}-\text{B}rownian motion.
\newblock {\em Applied Mathematics-A Journal of Chinese Universities},
  34(2):184--204, 2019.

\bibitem{fleming1984}
Wendell~H Fleming and Makiko Nisio.
\newblock On stochastic relaxed control for partially observed diffusions.
\newblock {\em Nagoya Mathematical Journal}, 93:71--108, 1984.

\bibitem{fuhrman2010}
Marco Fuhrman, Federica Masiero, and Gianmario Tessitore.
\newblock Stochastic equations with delay: Optimal control via bsdes and
  regular solutions of hamilton--jacobi--bellman equations.
\newblock {\em SIAM Journal on Control and Optimization}, 48(7):4624--4651,
  2010.

\bibitem{gao2009}
Fuqing Gao.
\newblock Pathwise properties and homeomorphic flows for stochastic
  differential equations driven by \text{G}-\text{B}rownian motion.
\newblock {\em Stochastic Processes and their Applications},
  119(10):3356--3382, 2009.

\bibitem{Hartmann2013}
Carsten Hartmann, Ralf Banisch, Marco Sarich, Tomasz Badowski, and Christof
  Sch{\"u}tte.
\newblock Characterization of rare events in molecular dynamics.
\newblock {\em Entropy}, 16(1):350--376, 2013.

\bibitem{hu2014}
Mingshang Hu, Shaolin Ji, and Shuzhen Yang.
\newblock A stochastic recursive optimal control problem under the
  \text{G}-expectation framework.
\newblock {\em Applied Mathematics \& Optimization}, 70(2):253--278, 2014.

\bibitem{hu2018}
Mingshang Hu and Falei Wang.
\newblock Stochastic optimal control problem with infinite horizon driven by
  \text{G}-\text{B}rownian motion.
\newblock {\em ESAIM: Control, Optimisation and Calculus of Variations},
  24(2):873--899, 2018.

\bibitem{Ivanov2008}
Anatoli~F Ivanov and Anatoly~V Swishchuk.
\newblock Optimal control of stochastic differential delay equations with
  application in economics.
\newblock {\em International Journal of Qualitative Theory of Differential
  Equations and Applications}, 2(2):201--213, 2008.

\bibitem{kebiri2018}
Omar Kebiri, Lara Neureither, and Carsten Hartmann.
\newblock Singularly perturbed forward-backward stochastic differential
  equations: application to the optimal control of bilinear systems.
\newblock {\em Computation}, 6(3):41, 2018.

\bibitem{kebiri2019}
Omar Kebiri, Lara Neureither, and Carsten Hartmann.
\newblock Adaptive importance sampling with forward-backward stochastic
  differential equations.
\newblock In {\em Stochastic Dynamics Out of Equilibrium: Institut Henri
  Poincar{\'e}, Paris, France, 2017}, pages 265--281. Springer, 2019.

\bibitem{mansour2022}
Mahmoud~BA Mansour and Asmaa~H Abobakr.
\newblock Stochastic differential equation models for tumor population growth.
\newblock {\em Chaos, Solitons \& Fractals}, 164:112738, 2022.

\bibitem{Menoukeu2015}
Olivier Menoukeu~Pamen.
\newblock Optimal control for stochastic delay systems under model uncertainty:
  a stochastic differential game approach.
\newblock {\em Journal of Optimization Theory and Applications}, 167:998--1031,
  2015.

\bibitem{oksendal2011}
Bernt {\O}ksendal, Agnes Sulem, and Tusheng Zhang.
\newblock Optimal control of stochastic delay equations and time-advanced
  backward stochastic differential equations.
\newblock {\em Advances in Applied Probability}, 43(2):572--596, 2011.

\bibitem{patel1982}
NK~Patel, PC~Das, and SS~Prabhu.
\newblock Optimal control of systems described by delay differential equations.
\newblock {\em International Journal of Control}, 36(2):303--311, 1982.

\bibitem{peng2007}
Shige Peng.
\newblock \text{G}-expectation, \text{G}-\text{B}rownian motion and related
  stochastic calculus of it{\^o} type.
\newblock In {\em Stochastic Analysis and Applications: The Abel Symposium
  2005}, pages 541--567. Springer, 2007.

\bibitem{peng2010}
Shige Peng.
\newblock Nonlinear expectations and stochastic calculus under uncertainty.
\newblock {\em arXiv preprint arXiv:1002.4546}, 24, 2010.

\bibitem{redjil2018}
Amel Redjil and Salah~Eddine Choutri.
\newblock On relaxed stochastic optimal control for stochastic differential
  equations driven by \text{G}-\text{B}rownian motion.
\newblock {\em ALEA, Lat.Am. J. Probab. Math. Stat}, 15:201–212, 2018.

\bibitem{Ren2017}
Yong Ren, Jun Wang, and Lanying Hu.
\newblock Multi-valued stochastic differential equations driven by
  \text{G}-\text{B}rownian motion and related stochastic control problems.
\newblock {\em International Journal of Control}, 90(5):1132--1154, 2017.

\bibitem{Ren2018}
Yong Ren, Wensheng Yin, and Rathinasamy Sakthivel.
\newblock Stabilization of stochastic differential equations driven by
  \text{G}-\text{B}rownian motion with feedback control based on discrete-time
  state observation.
\newblock {\em Automatica}, 95:146--151, 2018.

\bibitem{rosenblueth1992}
Javier~F Rosenblueth.
\newblock Strongly and weakly relaxed controls for time delay systems.
\newblock {\em SIAM journal on control and optimization}, 30(4):856--866, 1992.

\bibitem{soner2011m}
H~Mete Soner, Nizar Touzi, and Jianfeng Zhang.
\newblock Martingale representation theorem for the \text{G}-expectation.
\newblock {\em Stochastic Processes and their Applications}, 121(2):265--287,
  2011.

\bibitem{soner2011}
Mete Soner, Nizar Touzi, and Jianfeng Zhang.
\newblock Quasi-sure stochastic analysis through aggregation.
\newblock {\em Electronic Journal of Probability}, 16:1844--1879, 2011.

\bibitem{stoica2004}
George Stoica.
\newblock A stochastic delay financial model.
\newblock {\em Proceedings of the American Mathematical Society},
  133(6):1837--1841, 2004.

\bibitem{xu2010}
Yuhong Xu.
\newblock Backward stochastic differential equations under super linear
  \text{G}-expectation and associated hamilton-jacobi-bellman equations.
\newblock {\em arXiv preprint arXiv:1009.1042}, 2010.

\bibitem{yang2016}
Jie Yang and Weidong Zhao.
\newblock Numerical simulations for \text{G}-\text{B}rownian motion.
\newblock {\em Frontiers of Mathematics in China}, 11:1625--1643, 2016.

\bibitem{Yin2022}
Wensheng Yin, Jinde Cao, and Guoqiang Zheng.
\newblock Further results on stabilization of stochastic differential equations
  with delayed feedback control under $ g $-expectation framework.
\newblock {\em Discrete and Continuous Dynamical Systems-B}, 27(2):883--901,
  2022.

\bibitem{Yua2021}
Haiyan Yuan.
\newblock Some properties of numerical solutions for semilinear stochastic
  delay differential equations driven by \text{G}-\text{B}rownian motion.
\newblock {\em Mathematical Problems in Engineering}, 2021:1--26, 2021.

\end{thebibliography}

\end{document}